\documentclass[hidelinks,onefignum,onetabnum]{siamart251216}
\usepackage{amssymb}    
\usepackage{booktabs}

\ifpdf
\hypersetup{
  pdftitle={Uniform convergence of diffusion synthetic acceleration for heterogeneous slab transport},
  pdfauthor={M. Schlottbom}
}
\fi

\theoremstyle{plain}
\newtheorem{algorithm_self}[theorem]{Algorithm}
\newtheorem{remark}[theorem]{Remark}

\newcommand{\D}{\mathcal{D}}
\newcommand{\PP}{\mathcal{P}}
\newcommand{\QQ}{\mathcal{Q}_r}
\newcommand{\LL}{\Lambda}
\newcommand{\Wp}{\mathbb{W}^+}
\newcommand{\Wo}{\mathbb{W}^+_1}
\newcommand{\Wph}{\mathbb{W}^+_h}
\newcommand{\Woh}{\mathbb{W}^+_{1,h}}
\newcommand{\eps}{\varepsilon}
\newcommand{\dmu}{\,\mathrm{d}\mu}
\newcommand{\dz}{\,\mathrm{d}z}

\headers{Uniform convergence of diffusion acceleration}{M. Schlottbom}
\title{Uniform convergence of diffusion synthetic acceleration for heterogeneous slab transport}
\author{%
    Matthias Schlottbom%
    \thanks{Department of Applied Mathematics, University of Twente, P.O. Box 217, 7500 AE Enschede, The Netherlands.  
    {\small\textit{e-mail}:} {\small \texttt{m.schlottbom@utwente.nl}},
    {\small ORCID:} {\small \texttt{0000-0002-2527-6498}}}}

\begin{document}
\maketitle

\begin{abstract}
The diffusion synthetic accelerated (DSA) source iteration is a standard solver for the
radiative transfer equation. Using Fourier analysis, a convergence rate \(\rho_\infty(c)\le0.2247\,c\) with
\(c\) the maximum ratio of scattering to total cross section has been established for an infinite homogeneous medium.
For slab geometry with inflow boundary conditions and arbitrary bounded cross sections we prove
that the spectral radius of the DSA iteration is at most \(\rho_\infty(c)\).
We show that this convergence rate carries over to a variational discretization of the DSA iteration on
every conforming tensor-product Galerkin space whose angular factor contains the constants.
The analysis rests on an exact min--max characterization of the spectral radius. From it we derive a
checkable sufficient condition for a given rate. We verify this condition using a suitable
energy-stable projection, a weighted angular average whose weight is chosen so that the
condition holds with the rate \(\rho_\infty(c)\). Since the projection maps discrete
spaces into discrete spaces, the argument applies verbatim to the discrete iteration. As a
consequence, the condition number of the preconditioned system is uniformly bounded by
\(1+0.29\,c\), which implies rapid convergence of the preconditioned conjugate gradients method.
\end{abstract}

\begin{keywords}
radiative transfer, diffusion synthetic acceleration, source
iteration, preconditioning, convergence rate, even-parity formulation
\end{keywords}

% REQUIRED
\begin{MSCcodes}
65F08, 65F10, 65N12, 65N30, 82D75
\end{MSCcodes}

% \noindent\textbf{Key words.} radiative transfer, diffusion synthetic acceleration, source
% iteration, preconditioning, convergence rate, even-parity formulation

% \smallskip
% \noindent\textbf{AMS subject classifications.} 65F08, 65F10, 65N12, 65N30, 82D75

\section{Introduction}\label{sec:intro}

We consider the monoenergetic radiative transfer equation in slab geometry,
\begin{align}
\mu\partial_z\psi(z,\mu)+\sigma_t(z)\psi(z,\mu)
&=\sigma_s(z)\,\phi(z)+q(z,\mu)
&&\text{in }\D=(0,Z)\times(-1,1),\label{eq:rte}\\
\psi&=g &&\text{on }\Gamma_-,\label{eq:bc}
\end{align}
for the angular flux \(\psi\), where
\[
\phi(z)=\frac12\int_{-1}^1\psi(z,\mu')\dmu'
\]
is the scalar flux and \(\Gamma_-=\{(0,\mu):\mu>0\}\cup\{(Z,\mu):\mu<0\}\) is the inflow
boundary. Problems of this type arise in neutron transport, heat transfer and medical imaging
\cite{Arridge_2009,CaseZweifel67,Modest}, and slab geometry is the classical setting for their
analysis \cite{AdamsLarsen02}.

The basic iterative solver for \eqref{eq:rte}--\eqref{eq:bc} is the source iteration
\cite{AdamsLarsen02,MarchukLebedev86}, which lags the scattering term. Given a scalar flux
\(\phi^{(n)}\), one solves the transport problem without scattering, the sweep,
\begin{equation}\label{eq:sweep}
\mu\partial_z\psi^{(n+1/2)}+\sigma_t\psi^{(n+1/2)}=\sigma_s\phi^{(n)}+q
\ \ \text{in }\D,\qquad \psi^{(n+1/2)}=g\ \ \text{on }\Gamma_-,
\end{equation}
and sets \(\phi^{(n+1/2)}=\frac12\int_{-1}^1\psi^{(n+1/2)}\dmu\). Each sweep reduces the
error of the scalar flux, in a suitable norm, at least by the factor
$c=\sup_{z\in(0,Z)}\sigma_s(z)/\sigma_t(z)$.
In optically thick media \(c\) is close to one, and the source iteration converges slowly. 

Diffusion synthetic acceleration (DSA) is a stationary linear iteration for the scalar flux that corrects each sweep by the solution of a
diffusion problem, see \cite{AdamsLarsen02,LarsenMorel2010} for reviews.
Motivated by the diffusion limit of transport \cite{EggerSchlottbom2014,Larsen_1974}, one solves
\begin{equation}\label{eq:dsa}
-\partial_z\Big(\tfrac1{3\sigma_t}\partial_zF^{(n+1)}\Big)+\sigma_aF^{(n+1)}
=\sigma_s\big(\phi^{(n+1/2)}-\phi^{(n)}\big)\ \ \text{in }(0,Z),
\end{equation}
with \(\sigma_a=\sigma_t-\sigma_s\), subject to suitable boundary conditions, and sets
\begin{equation}\label{eq:update}
\phi^{(n+1)}=\phi^{(n+1/2)}+F^{(n+1)}.
\end{equation}

\paragraph{Convergence theory} The classical convergence theory for DSA is Fourier analysis
for an infinite homogeneous medium
\cite{AdamsLarsen02,GelbardHageman1969,Larsen1982}, see also \cite{LarsenMorel2010} for a more recent application.
There, every Fourier mode \(e^{i\kappa z}\) is an eigenfunction of the error propagation operator, and its eigenvalue,
the symbol, is \cite[(2.50)]{AdamsLarsen02}
\begin{equation}\label{eq:alsymbol}
\omega_c(\lambda)=\frac{3c}{\lambda^2+3(1-c)}
\left[\left(\frac{\lambda^2}{3}+1\right)\frac{\arctan\lambda}{\lambda}-1\right],
\qquad \lambda=\frac{\kappa}{\sigma_t}.
\end{equation}
The diffusion correction removes the slowly varying modes and the sweep damps the rapidly oscillating ones. 
The spectral radius, which we denote by \(\rho_\infty\) since it refers to the infinite medium, is
\(\rho_\infty(c)=\sup_\lambda\omega_c(\lambda)\le0.2247\,c\).
The least damped modes have wavelengths of a few mean free paths. 
This result explains the observed robustness of DSA in scattering dominated media if Fourier techniques apply.

Ashby et al.\ \cite{AshbyEtAl1995} derived DSA algebraically for
diamond-differenced discrete ordinates on finite slabs with nonconstant coefficients,
 and proved that the preconditioned matrix converges to the identity in the
thick, the thin, and the asymptotic diffusion limit. As they point out, these limits predict
no convergence rate and do not explain the convergence observed for problems of moderate
thickness \cite{AshbyEtAl1995}. 
For high-order discontinuous Galerkin discretizations, \cite{Haut2020} proves that the interior penalty DSA preconditioned operator is a perturbation
of the identity of the order of the mean free path, so that the iteration converges rapidly
for optically thick problems on a fixed mesh.
A quantitative convergence rate for upwind discontinuous Galerkin discretizations on polytopic meshes
with constant coefficients and vacuum inflow is established in \cite{CallooEvansMadiotPryer2026b}.
Their rate requires \(\max_K p^2/(\sigma_t h_K)\) to remain bounded by a fixed constant
\cite[(49), Rem.~4.7]{CallooEvansMadiotPryer2026b}, where $p$ is the polynomial degree and $h_K$ the local mesh size,
i.e., the cells have to be optically thick. It is therefore uniform in the diffusive limit on a fixed mesh. The bounds below hold on
every mesh and are in this respect complementary.

\paragraph{Applications and consistent discretization} 
Despite the limited analytical
results, DSA is used routinely in large scale transport calculations \cite{CallooEtAl2023,Haut2020,SouthworthHolecHaut2020}.
It is well-known that the discretization of \eqref{eq:dsa} must be consistent with the discretization of the sweep,
otherwise the acceleration may be lost or the iteration may even diverge in optically thick cells
\cite{AdamsLarsen02,Alcouffe1977,Larsen1982}.
For the even-parity form, a consistent diffusion discretization
is obtained directly \cite{MorelMcGhee1995}. In several dimensions and on
unstructured meshes, fully consistent diffusion discretizations are expensive \cite{Warsa2002}
and may lead to singular matrices \cite{Brown_1995}, which motivated partially consistent
schemes that retain only the scalar flux \cite{Ragusa2010,Wang2010}. These are cheaper and
unconditionally stable in the thin and in the thick limit, but their effectiveness can degrade,
and for layered media with strong material discontinuities the rate approaches the rate \(c\) of
the unaccelerated iteration \cite{Wang2010}. Loss of effectiveness at material discontinuities
is also observed for consistent schemes in several dimensions, where DSA is therefore commonly
used as a preconditioner for Krylov methods \cite{Warsa_2004}.

A different route to consistency is variational \cite{PS2020,DPS2022,BardinSchlottbom2025}.
Starting from variational formulations of the transport equation,
the diffusion correction is obtained by Galerkin projection, so that consistency holds by construction.
The rate proved in these works is \(c\),
whereas the observed rates are robust in the diffusive regime \cite{PS2020}.

\paragraph{Approach and main results} 
In this paper we analyze the DSA iteration in slab
geometry with inflow boundary conditions, for arbitrary bounded heterogeneous cross sections,
both for the continuous iteration and for its consistent variational discretization. 
We obtain (Theorem~\ref{thm:sharp}) that, if \(\sigma_t-\sigma_s\ge\gamma>0\), then 
\begin{align}\label{eq:sharp}
\rho(G)\le\rho_\infty(c)\qquad\text{and}\qquad\rho(G_h)\le\rho_\infty(c),
\end{align}
where \(\rho\) denotes the spectral radius, \(G\) is the error propagation operator of the DSA
iteration, and \(G_h\) that of the discrete iteration of
\cite{PS2020} on any conforming tensor-product space \(V_h\otimes Q_N\) whose angular factor \(Q_N\) contains the constants. 
These bounds are independent of the thickness of the slab, the mesh size and
the angular resolution. The infinite-medium value \(\rho_\infty(c)\) is therefore a uniform bound for
heterogeneous slabs with inflow boundary conditions and for their discretizations.
Moreover, the error of the scalar flux is reduced in the corresponding norm by 
at least the factor \(\rho_\infty(c)\) per DSA step.
Numerical experiments indicate that the bound is approached by optically thick heterogeneous slabs.
In particular, the condition number of the DSA preconditioned system is bounded by
\(1+0.29\,c\), for the continuous and the discrete iteration.
As a consequence, conjugate gradients with the DSA preconditioner are guaranteed to converge rapidly,
see also the numerical experiments in \cite{FaberManteuffel1989}.

The starting point for our analysis is that the DSA iteration for the scalar flux is
self-adjoint in a natural energy inner product, which gives an exact min--max characterization
of its spectral radius. From this characterization we derive a sufficient condition for a given
convergence rate. The verification of this condition requires an energy stable projection onto
the diffusion subspace. Since plain angular averages are not regular enough, we use weighted
angular averages instead. We show that the resulting projection also maps discrete energy
spaces into discrete diffusion spaces, so that the proofs carry over to the discrete iteration.
The quotient in the min--max characterization is not monotone under Galerkin projection, so
that the continuous estimate does not automatically imply the discrete one.
The proof of \eqref{eq:sharp} is technical. We provide also a much simpler construction, 
which gives the rate \(c/4\) (Remark~\ref{rem:half}).

The paper is organized as follows. Section~\ref{sec:prelim} introduces the even-parity
formulation, the variational DSA iteration and its discretization. Section~\ref{sec:it}
identifies the error propagation operator, characterizes its spectrum by a min--max principle
and derives from it a primal criterion for a given rate. Section~\ref{sec:main} constructs the
weighted angular average that verifies this criterion and proves the main result.
Section~\ref{sec:num} provides supporting numerical results, and Section~\ref{sec:concl} concludes.

\section{Variational setting}\label{sec:prelim}

We write \((v,w)=\int_{-1}^1\!\int_0^Zvw\dz\dmu\) for the inner product of \(L^2(\D)\) and
\(\|w\|^2_\sigma=(\sigma w,w)\) for a weight \(\sigma\ge0\), and we let \(\Wp\) denote the even-parity energy space of \cite[\S2.1]{PS2020}, that is the space of
functions \(v\in L^2(\D)\) that are even with respect to \(\mu\), \(v(\cdot,-\mu)=v(\cdot,\mu)\), with
\(\mu\partial_zv\in L^2(\D)\). Such functions have traces,
and \(\langle v,v\rangle_{L^2_-}\) is finite, where the inflow form is
\[
\langle v,w\rangle_{L^2_-}
=\int_0^1v(0,\mu)w(0,\mu)\,\mu\dmu+\int_{-1}^0v(Z,\mu)w(Z,\mu)\,|\mu|\dmu.
\]
We write further \(\PP v(z)=\tfrac{1}{2}\int_{-1}^1v(z,\mu')\dmu'\) for the angular average,
and we identify functions of \(z\) alone with \(\mu\)-independent functions on \(\D\). 
We note that
\begin{equation}\label{eq:H1}
\|\mu\partial_zw\|^2_{L^2(\D)}=\tfrac23\|\partial_zw\|^2_{L^2(0,Z)}
\end{equation}
for any \(\mu\)-independent \(w\).
The \emph{diffusion subspace} \cite[(12)]{PS2020}
\[
\Wo=\{v\in\Wp:\ v=\PP v\}
\]
can thus be identified with \(H^1(0,Z)\), with equivalent norms.

\paragraph{Even-parity formulation} By \cite[Prob.~3.1]{PS2020}, the even part \(u\) of the
solution \(\psi\) of \eqref{eq:rte}--\eqref{eq:bc} is the solution of the problem to find
\(u\in\Wp\) such that
\begin{equation}\label{eq:ep}
a(u,v)=\ell(v)\qquad\text{for all }v\in\Wp,
\end{equation}
see also \cite{EggerSchlottbom12}, \cite[Rem.~3.4]{PS2020}.
Here \(a\) is the symmetric bilinear form
\[
a(v,w)=2\langle v,w\rangle_{L^2_-}
+\Big(\tfrac1{\sigma_t}\mu\partial_zv,\mu\partial_zw\Big)
+\big((\sigma_t-\sigma_s\PP)v,w\big),
\]
and \(\ell\in(\Wp)'\) denotes the linear form generated by \(q\) and \(g\) \cite[(8)]{PS2020}.
Since the odd part of \(\psi\) has vanishing angular average, we have that \(\PP u=\PP\psi=\phi\). For further
reference we also introduce the bilinear forms
\[
k(v,w)=(\sigma_s\PP v,w),\qquad b=a+k.
\]
Note that \(k(v,w)=(\sigma_s\PP v,\PP w)\), since \(\sigma_s\) does not depend on \(\mu\).
We assume:

\medskip
\noindent\textbf{(A1)} \(\sigma_s,\sigma_t\in L^\infty(0,Z)\) are non-negative and
\(\sigma_a=\sigma_t-\sigma_s\ge\gamma>0\) almost everywhere.\\
\medskip 

Under (A1) the form \(a\) is bounded, symmetric and coercive on \(\Wp\) \cite[Thm.~3.3]{PS2020},
and so is \(b\). The form \(b\) defines a transport energy
\begin{equation}\label{eq:energyb}
\|v\|_b^2=2\langle v,v\rangle_{L^2_-}+\|\mu\partial_zv\|^2_{1/\sigma_t}+\|v\|^2_{\sigma_t}.
\end{equation}

\paragraph{The DSA iteration in variational form} 
We use the even-parity formulation \eqref{eq:ep} to formulate and discretize the DSA iteration \eqref{eq:sweep}--\eqref{eq:update}.

\begin{algorithm_self}\label{alg:var}
Given \(\phi^{(n)}\in L^2(0,Z)\), find \(u^{(n+1/2)}\in\Wp\) and \(F^{(n+1)}\in\Wo\) such that
\begin{align}
b(u^{(n+1/2)},v)&=(\sigma_s\phi^{(n)},\PP v)+\ell(v) &&\text{for all }v\in\Wp,\label{eq:half}\\
a(F^{(n+1)},\chi)&=\big(\sigma_s(\phi^{(n+1/2)}-\phi^{(n)}),\chi\big) &&\text{for all }\chi\in\Wo,\label{eq:corr}
\end{align}
where \(\phi^{(n+1/2)}=\PP u^{(n+1/2)}\), and set \(\phi^{(n+1)}=\phi^{(n+1/2)}+F^{(n+1)}\).
\end{algorithm_self}

\begin{remark}\label{rem:equiv}
Algorithm~\ref{alg:var} coincides with \eqref{eq:sweep}--\eqref{eq:update} equipped with the
Marshak boundary conditions \(F^{(n+1)}\mp\tfrac{2}{3\sigma_t}\partial_zF^{(n+1)}=0\) at \(z=0\)
and \(z=Z\) \cite{EggerSchlottbom12,PS2020}, see also \cite{MorelMcGhee1995} for a corresponding
equivalence for the even-parity \(S_N\) equations.
 Indeed, since \(b\) contains no scattering, \eqref{eq:half} is the weak even-parity
form of the sweep \eqref{eq:sweep}, and \(\PP u^{(n+1/2)}=\phi^{(n+1/2)}\).
After integration by parts, \eqref{eq:corr} is the weak form of \eqref{eq:dsa} with the Marshak conditions as natural
boundary conditions. The boundary treatment of the diffusion problem is thus determined by the
formulation, whereas in other derivations it enters as a parameter on which the observed rates
depend noticeably \cite{PrinceEtAl2020}. 
\end{remark}

\paragraph{Discretization} The arguments of Sections~\ref{sec:it}--\ref{sec:main} use only the
forms \(a\), \(b\), \(k\) and a projection \(\LL\) onto the diffusion subspace, which is
introduced in Section~\ref{sec:main}. They carry over verbatim to the discrete iteration once
\(\LL\) maps the discrete space into its diffusion subspace. We therefore introduce the discrete
setting already here and treat both cases simultaneously. 
As in \cite{PS2020}, let \(V_h\subset H^1(0,Z)\) be a finite element space on an arbitrary mesh of \((0,Z)\)
and let \(Q_N\subset L^2(-1,1)\) be a finite dimensional space of even functions of \(\mu\) which contains the
constants.
We define the discrete spaces
\[
\Wph=V_h\otimes Q_N\subset\Wp,\qquad \Woh=\Wph\cap\Wo=V_h\otimes\mathbf 1 ,
\]
where \(\mathbf 1\) denotes the constant function \(1\) on \((-1,1)\). We identify \(\Woh\) with \(V_h\).
The discrete forms are the restrictions of \(a\), \(b\), \(k\) to \(\Wph\). The discrete DSA
iteration is Algorithm~\ref{alg:var} with \(\Wp\), \(\Wo\) replaced by \(\Wph\), \(\Woh\) and with
\(\phi^{(n)}\in\PP\Wph=V_h\), see also \cite[(18)--(20)]{PS2020}. Its error propagation operator
\(G_h\) is identified in Section~\ref{sec:it}, and the only additional property of the
discretization used below is that the weighted angular averages of Lemma~\ref{lem:lam} map
\(\Wph\) into \(\Woh\).

\section{Analysis of the DSA iteration}\label{sec:it}

For isotropic scattering the sweep enters the iteration only through the angular average of
its result. We therefore introduce the operators \(S\) and \(D\) on scalar functions that
correspond to the two steps \eqref{eq:half} and \eqref{eq:corr} without data. For
\(\eta\in L^2(0,Z)\), let \(u_\eta\in\Wp\) and \(D\eta\in\Wo\) be the solutions of
\begin{equation}\label{eq:SD}
b(u_\eta,v)=(\sigma_s\eta,\PP v)\quad\text{for all }v\in\Wp,\quad
a(D\eta,\chi)=(\sigma_s\eta,\chi)\quad\text{for all }\chi\in\Wo,
\end{equation}
and set \(S\eta=\PP u_\eta\). Thus \(S\) is a sweep with source \(\sigma_s\eta\) followed by
averaging, and \(D\) is the diffusion solve \eqref{eq:dsa}.
Since \(\sigma_s\) may vanish, the weighted products below are understood on the quotient of
\(L^2(0,Z)\) by the kernel of \((\sigma_s\cdot,\cdot)\), that is, on the weighted space
\(L^2(\sigma_s\dz)\).
Before proceeding, we collect some properties of \(S\) and \(D\).

\begin{lemma}\label{lem:SD}
\(S\) and \(D\) are self-adjoint and non-negative with respect to \((\sigma_s\cdot,\cdot)\), and
\(\|S\|_{\sigma_s}\le c\).
\end{lemma}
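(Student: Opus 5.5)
Self-adjointness and non-negativity of both operators follow from symmetry of the forms $b$ and $a$ together with the defining relations \eqref{eq:SD}. For $\eta,\xi\in L^2(0,Z)$, test the equation for $u_\eta$ with $v=u_\xi$ and the equation for $u_\xi$ with $v=u_\eta$. This gives
\[
(\sigma_s S\eta,\xi)=(\sigma_s\xi,\PP u_\eta)=b(u_\xi,u_\eta)=b(u_\eta,u_\xi)=(\sigma_s\eta,\PP u_\xi)=(\sigma_s\eta,S\xi).
\]
Hence $S$ is self-adjoint in $(\sigma_s\cdot,\cdot)$. Taking $\xi=\eta$ gives $(\sigma_s S\eta,\eta)=b(u_\eta,u_\eta)\ge0$. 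The same computation applies to $D$. We test the $a$-equation for $D\eta\in\Wo$ with $\chi=D\xi\in\Wo$ and use that $\chi$ is $\mu$-independent, so $(\sigma_s\xi,\chi)$ is the weighted scalar product of $\xi$ with $D\eta$. This yields $(\sigma_s D\eta,\xi)=a(D\xi,D\eta)$, which is symmetric and non-negative by coercivity of $a$ under (A1). Both operators depend on $\eta$ only through $\sigma_s\eta$, so they are well defined on the quotient $L^2(\sigma_s\dz)$.

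For the norm bound $\|S\|_{\sigma_s}\le c$ we exploit the absence of scattering in $b$. First we show that $b$ dominates $\|\PP v\|_{\sigma_t}^2$. Since $\PP$ is an orthogonal projection in $L^2(\D)$ commuting with multiplication by functions of $z$, we have $\|\PP v\|_{\sigma_t}\le\|v\|_{\sigma_t}$. The boundary term and the streaming term in \eqref{eq:energyb} are non-negative, so $b(v,v)\ge\|v\|^2_{\sigma_t}\ge\|\PP v\|^2_{\sigma_t}$. Next, testing the equation for $u_\eta$ with $u_\eta$ gives
\[
\|u_\eta\|_{\sigma_t}^2\le b(u_\eta,u_\eta)=(\sigma_s\eta,\PP u_\eta)\le\|\eta\|_{\sigma_s}\|S\eta\|_{\sigma_s}.
\]
Combining this with $\|S\eta\|^2_{\sigma_s}\le c\,\|S\eta\|^2_{\sigma_t}\le c\,\|u_\eta\|^2_{\sigma_t}$, where we used $\sigma_s\le c\sigma_t$, we obtain only $\|S\eta\|_{\sigma_s}\le c\|\eta\|_{\sigma_s}$ after dividing. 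The Cauchy--Schwarz step must be done with the correct weights: here $\|\cdot\|_{\sigma_s}$ is measured in the factor $2$ of $L^2(\D)$, and the bound closes to exactly $c$ rather than $\sqrt c$.

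A cleaner alternative for the norm bound is available because $S$ is self-adjoint and non-negative. Then $\|S\|_{\sigma_s}=\sup_\eta(\sigma_s S\eta,\eta)/\|\eta\|^2_{\sigma_s}$. We have $(\sigma_s S\eta,\eta)=b(u_\eta,u_\eta)=\sup_v (\sigma_s\eta,\PP v)^2/b(v,v)$. For any $v$,
\[
(\sigma_s\eta,\PP v)^2\le\|\eta\|^2_{\sigma_s}\,\|\PP v\|^2_{\sigma_s}\le c\,\|\eta\|^2_{\sigma_s}\|\PP v\|^2_{\sigma_t}\le c\,\|\eta\|^2_{\sigma_s}\, b(v,v).
\]
This gives $\|S\|_{\sigma_s}\le c$ directly. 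The main point to handle carefully is this variational (Rayleigh quotient) step together with the quotient-space interpretation when $\sigma_s$ vanishes on parts of $(0,Z)$. Everything else is routine, and the same argument holds verbatim on $\Wph$.
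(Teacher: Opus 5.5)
Your proof is correct and is essentially the paper's. Self-adjointness and non-negativity come from testing \eqref{eq:SD} with $u_\xi$ and $D\xi$ and using the symmetry of $b$ and $a$; in your $D$-identity the roles of $\eta$ and $\xi$ are swapped, which is harmless by symmetry. Your first chain $\|S\eta\|^2_{\sigma_s}\le c\,\|u_\eta\|^2_{\sigma_t}\le c\,b(u_\eta,u_\eta)=c\,(\sigma_s\eta,S\eta)\le c\,\|\eta\|_{\sigma_s}\|S\eta\|_{\sigma_s}$ is exactly the paper's estimate and already gives $\|S\|_{\sigma_s}\le c$, so your hedging (``we obtain only'', ``$c$ rather than $\sqrt c$'') is unfounded. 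The Rayleigh-quotient variant via $b(u_\eta,u_\eta)=\sup_v(\sigma_s\eta,\PP v)^2/b(v,v)$ is also valid, but it is a dual restatement of the same three ingredients rather than a different proof.
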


\begin{proof}
Testing the definition of \(u_\eta\) with \(u_\chi\) gives \((\sigma_s\chi,S\eta)=b(u_\chi,u_\eta)\),
hence \((\sigma_s\chi,S\eta)=(\sigma_s\eta,S\chi)\) and
\((\sigma_s\eta,S\eta)=\|u_\eta\|_b^2\ge0\). Similarly, \((\sigma_s\eta,D\chi)=a(D\eta,D\chi)\),
which equals \((\sigma_s\chi,D\eta)\). Finally, \(\sigma_s\le c\,\sigma_t\), \(S\eta=\PP u_\eta\) and
\(\|\PP u_\eta\|_{\sigma_t}\le\|u_\eta\|_{\sigma_t}\) together with \eqref{eq:energyb}
and the Cauchy--Schwarz inequality give
\[
\|S\eta\|^2_{\sigma_s}\le c\,\|\PP u_\eta\|^2_{\sigma_t}\le c\,\|u_\eta\|^2_b
=c\,(\sigma_s\eta,S\eta)\le c\,\|\eta\|_{\sigma_s}\|S\eta\|_{\sigma_s},
\]
which proves \(\|S\|_{\sigma_s}\le c\).
\end{proof}

With \(u_\ell\in\Wp\) defined by
\(b(u_\ell,v)=\ell(v)\) for all \(v\in\Wp\) and \(f=\PP u_\ell\), the steps of
Algorithm~\ref{alg:var} read
\[
\phi^{(n+1/2)}=S\phi^{(n)}+f,\qquad F^{(n+1)}=D\big(\phi^{(n+1/2)}-\phi^{(n)}\big).
\]
For the full step we thus obtain
\[
\phi^{(n+1)}=(I-(I+D)(I-S))\phi^{(n)}+(I+D)f.
\]
Since \(a=b-k\), the solution \(u\) of \eqref{eq:ep} satisfies
\(b(u,v)=(\sigma_s\phi,\PP v)+\ell(v)\) for all \(v\in\Wp\), where \(\phi=\PP u\), that is,
\((I-S)\phi=f\). We thus recognize the well-known fact that the DSA iteration is a Richardson iteration for the scalar flux equation
with preconditioner \(I+D\) \cite{AdamsLarsen02}.
The error \(e^{(n)}=\phi-\phi^{(n)}\) satisfies the linear iteration
\begin{equation}\label{eq:G}
e^{(n+1)}=G\,e^{(n)}\qquad\text{with}\qquad G=I-(I+D)(I-S).
\end{equation}
The same holds for the discrete iteration, with \(G_h=I-(I+D_h)(I-S_h)\), where \(S_h\) and \(D_h\)
are defined by \eqref{eq:SD} with \(\Wp\), \(\Wo\) replaced by \(\Wph\), \(\Woh\).

Without the correction step, \(e^{(n+1)}=Se^{(n)}\), and Lemma~\ref{lem:SD} gives the rate \(c\),
see also \cite{DPS2022,PS2020}.
To obtain improved bounds, the correction has to enter quantitatively.

\begin{lemma}\label{lem:selfadj}
\(G\) is self-adjoint with respect to \(\langle x,y\rangle_\star=(\sigma_s(I+D)^{-1}x,y)\), and
\[
\|G\|_\star=\rho(G)=\max\big\{|1-\lambda|:\ \lambda\in\sigma\big((I+D)(I-S)\big)\big\}.
\]
\end{lemma}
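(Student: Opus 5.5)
The plan is to work in the Hilbert space $L^2(\sigma_s\dz)$ (the quotient mentioned before Lemma~\ref{lem:SD}) with the inner product $(\sigma_s\cdot,\cdot)$, in which $S$ and $D$ are bounded, self-adjoint and non-negative by Lemma~\ref{lem:SD}. First I check that $\langle\cdot,\cdot\rangle_\star$ is a genuine inner product equivalent to $(\sigma_s\cdot,\cdot)$. Since $D\ge0$ is self-adjoint and bounded, $I+D$ is self-adjoint with spectrum in $[1,1+\|D\|_{\sigma_s}]$. Hence $(I+D)^{-1}$ is bounded, self-adjoint and positive definite, with spectrum in $[(1+\|D\|)^{-1},1]$. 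Boundedness of $D$ on $L^2(\sigma_s\dz)$ follows from the coercivity of $a$ on $\Wo$ and the Cauchy--Schwarz inequality: $a(D\eta,D\eta)=(\sigma_s\eta,D\eta)\le\|\eta\|_{\sigma_s}\|D\eta\|_{\sigma_s}$, and $\|D\eta\|_{\sigma_s}^2\le\|\sigma_s\|_\infty\|D\eta\|^2_{L^2}\lesssim a(D\eta,D\eta)$. So $\langle\cdot,\cdot\rangle_\star$ is equivalent to $(\sigma_s\cdot,\cdot)$ and the space is complete in it.

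Self-adjointness of $G$ then reduces to a one-line computation. Since $G=I-(I+D)(I-S)$, it suffices to show that $T=(I+D)(I-S)$ is $\star$-self-adjoint. We have
\[
\langle Tx,y\rangle_\star=(\sigma_s(I+D)^{-1}(I+D)(I-S)x,y)=(\sigma_s(I-S)x,y),
\]
which is symmetric in $x,y$ because $S$ is $\sigma_s$-self-adjoint. Thus $\langle Tx,y\rangle_\star=\langle x,Ty\rangle_\star$, and $G$ is $\star$-self-adjoint as well.

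For a bounded self-adjoint operator on a Hilbert space, the operator norm equals the spectral radius: $\|G\|_\star=\sup_{\|x\|_\star=1}|\langle Gx,x\rangle_\star|=\rho(G)$. The spectral mapping theorem for the polynomial $1-\lambda$ gives $\sigma(G)=\{1-\lambda:\lambda\in\sigma(T)\}$. The spectrum is a property of the operator, not of the chosen equivalent norm, so $\sigma(T)$ can be read in any of the equivalent norms. Since $\sigma(T)$ is compact and real (because $T$ is self-adjoint in $\star$), the supremum of $|1-\lambda|$ over it is attained, which justifies writing $\max$.

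I also note, as a consistency check, that $\langle Tx,x\rangle_\star=(\sigma_s(I-S)x,x)\ge(1-c)\|x\|_{\sigma_s}^2\ge0$ by $\|S\|_{\sigma_s}\le c$, so $\sigma(T)\subset[0,\infty)$. The discrete case is identical, with $S_h$, $D_h$ on the finite-dimensional space $V_h$, where all spectral facts are elementary linear algebra.

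The only point needing care, and the main obstacle, is the functional-analytic bookkeeping when $\sigma_s$ vanishes on a set of positive measure. There $(\sigma_s\cdot,\cdot)$ is only a seminorm on $L^2(0,Z)$, so I must verify that $S$, $D$ and $G$ are well defined on the quotient. This holds because $S\eta$ and $D\eta$ depend on $\eta$ only through $\sigma_s\eta$ by \eqref{eq:SD}, so they vanish on the kernel. The iteration \eqref{eq:G} then descends to $L^2(\sigma_s\dz)$.
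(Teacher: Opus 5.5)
Your proof is correct and follows essentially the same route as the paper. The paper notes that $(I+D)^{-1}G=(I+D)^{-1}-(I-S)$ is self-adjoint with respect to $(\sigma_s\cdot,\cdot)$ by Lemma~\ref{lem:SD}, which is your computation $\langle Tx,y\rangle_\star=(\sigma_s(I-S)x,y)$, and then uses that norm and spectral radius coincide for self-adjoint operators; your extra bookkeeping (boundedness of $D$, equivalence of the inner products, spectral mapping, the quotient) only makes explicit what the paper leaves implicit.
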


\begin{proof}
\((I+D)^{-1}G=(I+D)^{-1}-(I-S)\) is a difference of two operators which are self-adjoint with
respect to \((\sigma_s\cdot,\cdot)\) by Lemma~\ref{lem:SD}. For a self-adjoint operator, norm
and spectral radius agree.
\end{proof}

To compare the sweep with the diffusion correction, we introduce the
energies
\begin{gather*}
X(f)=\sup_{v\in\Wp}\big(2(f,\PP v)-b(v,v)\big),\qquad
Y(f)=\sup_{\chi\in\Wo}\big(2(f,\PP\chi)-b(\chi,\chi)\big),\\
m(f)=(f,f/\sigma_s),
\end{gather*}
for \(f\in L^2(0,Z)\),
with \(m(f)=\infty\) unless \(f\) vanishes where \(\sigma_s\) does. \(X(f)\) is the energy of the
transport problem without scattering with the isotropic source \(f\), and \(Y(f)\) is the same
supremum restricted to the diffusion subspace.
Since \(\Wo\subset\Wp\), \(Y\le X\).

\begin{lemma}\label{lem:XY}
For \(f=\sigma_s\eta\) with \(\eta\in L^2(0,Z)\),
\[
m(f)-X(f)=\big(\sigma_s(I-S)\eta,\eta\big),\qquad m(f)-Y(f)=\big(\sigma_s(I+D)^{-1}\eta,\eta\big).
\]
\end{lemma}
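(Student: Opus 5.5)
The plan is to evaluate both suprema explicitly through their maximizers, which are exactly the solutions of the problems \eqref{eq:SD} defining \(S\) and \(D\).

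\emph{First identity.} With \(f=\sigma_s\eta\), the quadratic functional \(v\mapsto 2(f,\PP v)-b(v,v)\) is strictly concave on \(\Wp\), since \(b\) is coercive. Its maximizer \(v^\ast\) is characterized by \(b(v^\ast,w)=(\sigma_s\eta,\PP w)\) for all \(w\in\Wp\), so \(v^\ast=u_\eta\). Hence
\[
X(f)=2(\sigma_s\eta,\PP u_\eta)-b(u_\eta,u_\eta)=b(u_\eta,u_\eta)=(\sigma_s\eta,S\eta),
\]
using the identity \((\sigma_s\eta,S\eta)=\|u_\eta\|_b^2\) from the proof of Lemma~\ref{lem:SD}. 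Since \(m(f)=(\sigma_s\eta,\eta)\), we get \(m(f)-X(f)=(\sigma_s(I-S)\eta,\eta)\).

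\emph{Setting up the second identity.} For \(\chi\in\Wo\) we have \(\PP\chi=\chi\), hence \(b(\chi,\chi)=a(\chi,\chi)+(\sigma_s\chi,\chi)\). Write \(f=\sigma_s\eta\) and \(\chi\) as a function of \(z\). Then
\[
2(\sigma_s\eta,\chi)-a(\chi,\chi)-(\sigma_s\chi,\chi)=(\sigma_s\eta,\eta)-\big[a(\chi,\chi)+(\sigma_s(\eta-\chi),\eta-\chi)\big].
\]
So \(m(f)-Y(f)=\inf_{\chi\in\Wo}\big[a(\chi,\chi)+\|\eta-\chi\|_{\sigma_s}^2\big]\).

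\emph{Solving the minimization.} The minimizer satisfies \(a(\chi,\xi)=(\sigma_s(\eta-\chi),\xi)\) for all \(\xi\in\Wo\). By the definition of \(D\) in \eqref{eq:SD}, this says \(\chi=D(\eta-\chi)\). Setting \(\zeta=\eta-\chi\) gives \(\eta=(I+D)\zeta\), so \(\zeta=(I+D)^{-1}\eta\). The inverse exists because \(D\) is non-negative and self-adjoint in \((\sigma_s\cdot,\cdot)\) by Lemma~\ref{lem:SD}. At the minimizer,
\[
a(\chi,\chi)+\|\zeta\|_{\sigma_s}^2=a(D\zeta,D\zeta)+(\sigma_s\zeta,\zeta)=(\sigma_s\zeta,D\zeta)+(\sigma_s\zeta,\zeta)=(\sigma_s\zeta,(I+D)\zeta)=(\sigma_s\zeta,\eta).
\]
By self-adjointness this equals \((\sigma_s(I+D)^{-1}\eta,\eta)\).

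\emph{Main obstacle.} The hard part is the functional setting on \(L^2(\sigma_s\dz)\) when \(\sigma_s\) vanishes on a set. There \(\eta\) is only defined modulo the kernel of \((\sigma_s\cdot,\cdot)\), and \(\chi\in H^1\) need not be. I would handle this by noting two facts. First, all quantities depend on \(\eta\) only through \(\sigma_s\eta\). Second, \(\|\eta-\chi\|^2_{\sigma_s}\) is well defined for \(\chi\in\Wo\), since it depends only on values of \(\chi\) where \(\sigma_s>0\). The inverse \((I+D)^{-1}\) is then understood on the quotient space, where \(I+D\ge I\) guarantees it is well defined and bounded.

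The discrete case is verbatim, with \(\Wph\), \(\Woh\), \(S_h\), \(D_h\) in place of \(\Wp\), \(\Wo\), \(S\), \(D\).
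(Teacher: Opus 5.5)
Your proof is correct and follows the paper's argument: both suprema are evaluated at their maximizers, namely \(u_\eta\) and the solution of \(\chi=D(\eta-\chi)\). The only cosmetic difference is that you complete the square to write \(m(f)-Y(f)=\min_{\chi\in\Wo}\big[a(\chi,\chi)+\|\eta-\chi\|^2_{\sigma_s}\big]\) and evaluate this minimum at \(\zeta=(I+D)^{-1}\eta\), whereas the paper computes \(Y(f)=(\sigma_s\eta,(I+D)^{-1}D\eta)\) directly and then uses \(I-(I+D)^{-1}D=(I+D)^{-1}\).
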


\begin{proof}
The supremum defining \(X(f)\) is attained at the solution \(v\in\Wp\) of
\(b(v,w)=(\sigma_s\eta,\PP w)\) for all \(w\in\Wp\), that is at \(u_\eta\), and
\(X(f)=b(u_\eta,u_\eta)=(\sigma_s\eta,S\eta)\). The supremum defining \(Y(f)\) is attained at
\(\chi_\eta\in\Wo\) with \(b(\chi_\eta,\chi)=(\sigma_s\eta,\chi)\) for all \(\chi\in\Wo\). Since
\(b=a+k\) and \(k(\chi_\eta,\chi)=(\sigma_s\chi_\eta,\chi)\) on \(\Wo\), this means
\(a(\chi_\eta,\chi)=\big(\sigma_s(\eta-\chi_\eta),\chi\big)\), i.e.,
\(\chi_\eta=D(\eta-\chi_\eta)\) and \(\chi_\eta=(I+D)^{-1}D\eta\). Hence
\(Y(f)=(\sigma_s\eta,\chi_\eta)=(\sigma_s\eta,(I+D)^{-1}D\eta)\). Since \(m(f)=(\sigma_s\eta,\eta)\)
and \(I-(I+D)^{-1}D=(I+D)^{-1}\), the claim follows.
\end{proof}

\begin{lemma}\label{lem:lam1}
We have \(I+D\le(I-S)^{-1}\) with respect to \((\sigma_s\cdot,\cdot)\) and
\(\sigma\big((I+D)(I-S)\big)\subset[1-c,1]\). Moreover, the smallest spectral value
\(\lambda_{\min}\) of \((I+D)(I-S)\) satisfies
\[
1-\rho(G)=\lambda_{\min}=\inf_{f}\frac{m(f)-X(f)}{m(f)-Y(f)} ,
\]
where the infimum is taken over \(f=\sigma_s\eta\ne0\), \(\eta\in L^2(0,Z)\).
\end{lemma}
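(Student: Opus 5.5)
Write \(A=I-S\) and \(B=(I+D)^{-1}\). Both are self-adjoint with respect to \((\sigma_s\cdot,\cdot)\) by Lemma~\ref{lem:SD}. Since \(S\ge0\) and \(\|S\|_{\sigma_s}\le c\), we have \((1-c)I\le A\le I\); in particular \(A\) is positive definite, because \(c<1\) follows from (A1) on the set where \(\sigma_s>0\). Likewise \(D\ge0\) gives \(0< B\le I\). Lemma~\ref{lem:XY} translates the two energies into quadratic forms: \(m(f)-X(f)=(\sigma_sA\eta,\eta)\) and \(m(f)-Y(f)=(\sigma_sB\eta,\eta)\). The inequality \(Y\le X\) then reads \(A\le B\), i.e.\ \(I-S\le(I+D)^{-1}\). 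For positive definite self-adjoint operators the order reverses under inversion, so \(I+D\le(I-S)^{-1}\), which is the first claim. I would prove the inversion step as usual: \(A\le B\) gives \(\|B^{-1/2}A^{1/2}\|\le1\), hence \(\|A^{1/2}B^{-1/2}\|\le1\), hence \(B^{-1}\le A^{-1}\).

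For the spectrum, note that \((I+D)(I-S)=B^{-1}A\) is similar to \(B^{-1/2}AB^{-1/2}\). Under the bounded invertible change of variables \(\eta=B^{1/2}\xi\), this operator is self-adjoint with respect to \((\sigma_s\cdot,\cdot)\). Its spectrum is therefore real, and it is contained in the closed interval spanned by the Rayleigh quotients
\[
\frac{(\sigma_s A\eta,\eta)}{(\sigma_s B\eta,\eta)} .
\]
The upper bound \(\le1\) is \(A\le B\). For the lower bound I would use \(A\ge(1-c)I\) and \(B\le I\), so that \((\sigma_sA\eta,\eta)\ge(1-c)(\sigma_s\eta,\eta)\ge(1-c)(\sigma_sB\eta,\eta)\). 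Hence \(\sigma(B^{-1}A)\subset[1-c,1]\). Similarity preserves the spectrum. In infinite dimensions the similarity is by bounded operators with bounded inverses: \(B\) is bounded and \(B^{-1}=I+D\) is bounded because \(D\) is a bounded solution operator.

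The same Rayleigh-quotient description gives the minimum of the spectrum as the infimum of the quotient over \(\eta\ne0\) in \(L^2(\sigma_s\dz)\). By Lemma~\ref{lem:XY} this is exactly \(\inf_f (m(f)-X(f))/(m(f)-Y(f))\) over \(f=\sigma_s\eta\ne0\). Finally, Lemma~\ref{lem:selfadj} gives \(\rho(G)=\max|1-\lambda|\) over \(\lambda\in\sigma(B^{-1}A)\subset[1-c,1]\). On this set \(1-\lambda\in[0,c]\) is non-negative, so the maximum is attained at \(\lambda_{\min}\), and \(\rho(G)=1-\lambda_{\min}\). The discrete case is identical with \(S_h,D_h\).

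The only delicate point is functional-analytic rather than computational. One must work on the quotient space \(L^2(\sigma_s\dz)\) so that \((\sigma_s\cdot,\cdot)\) is a genuine inner product. One must also check that \(S\), \(D\) and \(B^{\pm1/2}\) are well defined and bounded there, that is, that they depend only on the class of \(\eta\), which holds since the data enter only through \(\sigma_s\eta\). Once this is in place, the operator inequality inversion and the Rayleigh-quotient characterization of the spectral bottom are standard.
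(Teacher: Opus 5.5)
Your proof is correct and follows the paper's route. You derive \((I-S)\le(I+D)^{-1}\) from \(Y\le X\) and Lemma~\ref{lem:XY}, invert the operator inequality, and bound the quotient \((\sigma_s(I-S)\eta,\eta)/(\sigma_s(I+D)^{-1}\eta,\eta)\) between \(1-c\) and \(1\); your similarity with \((I+D)^{1/2}(I-S)(I+D)^{1/2}\) is just a reformulation of the self-adjointness in \(\langle\cdot,\cdot\rangle_\star\) from Lemma~\ref{lem:selfadj}. One notational slip: the conjugating substitution should read \(\xi=B^{1/2}\eta\), not \(\eta=B^{1/2}\xi\), though the Rayleigh quotient you then write down is the correct one.
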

\begin{proof}
By Lemma~\ref{lem:XY} and \(Y\le X\), \(\big(\sigma_s(I-S)\eta,\eta\big)\le
\big(\sigma_s(I+D)^{-1}\eta,\eta\big)\) for all \(\eta\), that is, \(I-S\le(I+D)^{-1}\). Both
operators are positive and self-adjoint with respect to \((\sigma_s\cdot,\cdot)\) by
Lemma~\ref{lem:SD}, so this is equivalent to \(I+D\le(I-S)^{-1}\). Moreover, \((I+D)(I-S)\) is
self-adjoint with respect to \(\langle\cdot,\cdot\rangle_\star\) by Lemma~\ref{lem:selfadj}. We thus have that
the  Rayleigh quotients
\[
\frac{\langle(I+D)(I-S)\eta,\eta\rangle_\star}{\langle\eta,\eta\rangle_\star}
=\frac{\big(\sigma_s(I-S)\eta,\eta\big)}{\big(\sigma_s(I+D)^{-1}\eta,\eta\big)}
\]
are bounded from above by \(1\). Since \(I\leq I+D\), we have 
\((\sigma_s(I+D)^{-1}\eta,\eta)\leq \|\eta\|_{\sigma_s}^2\), which yields the lower bound \(1-c\) by Lemma~\ref{lem:SD}. Since \((I+D)(I-S)\) is
self-adjoint with respect to \(\langle\cdot,\cdot\rangle_\star\), its spectrum lies in the closure
of the set of Rayleigh quotients, and \(\lambda_{\min}\) is their infimum. By
Lemma~\ref{lem:XY}, the Rayleigh quotient at \(\eta\) equals \((m(f)-X(f))/(m(f)-Y(f))\) with
\(f=\sigma_s\eta\). Finally, \(\rho(G)=\max\{|1-\lambda|:\lambda\in\sigma((I+D)(I-S))\}
=1-\lambda_{\min}\) by Lemma~\ref{lem:selfadj} and \(\sigma((I+D)(I-S))\subset[1-c,1]\).
\end{proof}

\subsection{A primal criterion for the rate}\label{sec:scalar}

By Lemma~\ref{lem:lam1}, a lower bound for the quotient \((m-X)/(m-Y)\) yields an upper bound
for \(\rho(G)\). The functionals \(X\) and \(Y\) are the Fenchel conjugates of the quadratic forms \(b|_{\Wp}\) and
\(b|_{\Wo}\), evaluated at the functional \((f,\PP\,\cdot)\). Writing \(Y\) as a supremum over
\(\chi\) turns the min--max form into a primal condition, which is what we shall verify.

\begin{lemma}\label{lem:primal}
Let \(0<r\le1\). If for every \(v\in\Wp\) there is \(\chi\in\Wo\) with
\begin{equation}\label{eq:fenchel}
\big\|\PP v-(1-r)\chi\big\|^2_{\sigma_s}+r(1-r)\,b(\chi,\chi)\ \le\ r\,b(v,v),
\end{equation}
then \(\rho(G)\le r\).
\end{lemma}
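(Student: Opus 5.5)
By Lemma~\ref{lem:lam1}, $1-\rho(G)=\lambda_{\min}$ is the infimum of $(m(f)-X(f))/(m(f)-Y(f))$ over $f=\sigma_s\eta\ne0$. Since $m(f)-Y(f)=(\sigma_s(I+D)^{-1}\eta,\eta)>0$ for $\eta\ne0$ in $L^2(\sigma_s\dz)$, the bound $\rho(G)\le r$ follows once we show
\[
X(f)\le r\,m(f)+(1-r)\,Y(f)\qquad\text{for all }f=\sigma_s\eta .
\]
Indeed, this is equivalent to $m(f)-X(f)\ge(1-r)\big(m(f)-Y(f)\big)$, hence $\lambda_{\min}\ge1-r$.

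Since $X(f)$ is a supremum over $v\in\Wp$, it suffices to prove, for each fixed $v\in\Wp$,
\[
2(f,\PP v)-b(v,v)\le r\,m(f)+(1-r)\,Y(f).
\]
Fix $v$ and let $\chi\in\Wo$ be the function provided by the hypothesis \eqref{eq:fenchel}. Because $\chi\in\Wo$ satisfies $\PP\chi=\chi$, the definition of $Y$ as a supremum gives
\[
Y(f)\ge 2(f,\chi)-b(\chi,\chi).
\]
Since $1-r\ge0$, it therefore suffices to show
\[
r\,m(f)-2\big(f,\PP v-(1-r)\chi\big)+b(v,v)-(1-r)\,b(\chi,\chi)\ \ge\ 0 .
\]

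Set $w=\PP v-(1-r)\chi$. With $f=\sigma_s\eta$ we have $(f,w)=(\sigma_s\eta,w)\le \|\eta\|_{\sigma_s}\|w\|_{\sigma_s}=m(f)^{1/2}\|w\|_{\sigma_s}$ by Cauchy--Schwarz. Young's inequality then gives
\[
2(f,w)\le r\,m(f)+\tfrac1r\|w\|_{\sigma_s}^2 .
\]
It thus remains to check
\[
b(v,v)-(1-r)\,b(\chi,\chi)\ge \tfrac1r\|w\|^2_{\sigma_s}.
\]
After multiplying by $r>0$, this is exactly \eqref{eq:fenchel}.

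The only delicate points are bookkeeping. First, the Cauchy--Schwarz step must be taken in the weighted space $L^2(\sigma_s\dz)$; this is legitimate because $f=\sigma_s\eta$, so $m(f)$ is finite. Second, the nonnegativity of $1-r$ is needed to pass from the particular $\chi$ to the supremum $Y$. The discrete case is identical, with $\Wph$ and $\Woh$ in place of $\Wp$ and $\Wo$, since Lemma~\ref{lem:lam1} holds verbatim for $G_h$.
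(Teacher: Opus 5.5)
Your proof is correct and follows the paper's argument essentially step for step. You split off $\PP v-(1-r)\chi$, bound its pairing with $f$ by Cauchy--Schwarz and Young in $L^2(\sigma_s\dz)$, absorb $r^{-1}\|\PP v-(1-r)\chi\|_{\sigma_s}^2$ using \eqref{eq:fenchel}, bound the remaining $\chi$-terms by $Y(f)$, and conclude $X(f)\le r\,m(f)+(1-r)Y(f)$ and hence $\rho(G)\le r$ via Lemma~\ref{lem:lam1}.
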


\begin{proof}
Let \(f=\sigma_s\eta\ne0\), so that \(m(f)=\|\eta\|^2_{\sigma_s}<\infty\). Let \(v\in\Wp\), let
\(\chi\in\Wo\) be as in \eqref{eq:fenchel}, and set \(g=\PP v-(1-r)\chi\). Since \(\PP\chi=\chi\),
\[
2(f,\PP v)=2(f,g)+2(1-r)(f,\PP\chi).
\]
By the Cauchy--Schwarz and Young inequalities,
\[
2(f,g)=2(\sigma_s\eta,g)\le2\|\eta\|_{\sigma_s}\|g\|_{\sigma_s}
\le r\,m(f)+r^{-1}\|g\|^2_{\sigma_s},
\]
and \eqref{eq:fenchel} gives \(r^{-1}\|g\|^2_{\sigma_s}\le b(v,v)-(1-r)\,b(\chi,\chi)\).
Combining these estimates and using the definition of \(Y\), we obtain
\[
2(f,\PP v)-b(v,v)\le r\,m(f)+(1-r)\big(2(f,\PP\chi)-b(\chi,\chi)\big)\le r\,m(f)+(1-r)Y(f).
\]
Taking the supremum over \(v\in\Wp\) gives \(X(f)\le r\,m(f)+(1-r)Y(f)\), that is,
\[
m(f)-X(f)\ge(1-r)\big(m(f)-Y(f)\big).
\]
By Lemma~\ref{lem:XY}, \(m(f)-Y(f)=(\sigma_s(I+D)^{-1}\eta,\eta)>0\), so that the quotient in
Lemma~\ref{lem:lam1} is at least \(1-r\) at this \(f\). Since \(\eta\) was arbitrary, the
infimum satisfies \(\lambda_{\min}\ge1-r\), and \(\rho(G)=1-\lambda_{\min}\le r\).
\end{proof}

\begin{remark}\label{rem:necessary}
Condition \eqref{eq:fenchel} is also necessary for \(0<r<1\):
if it fails, then $\rho(G)>r$. 
In fact, let \(\alpha=1-r\), and let
\(v\in\Wp\) be such that \eqref{eq:fenchel} fails for every \(\chi\in\Wo\). Its left-hand side
is a strictly convex and coercive quadratic in \(\chi\), hence minimal at the unique
\(\chi_v\in\Wo\) with
\begin{equation}\label{eq:chiv}
r\,b(\chi_v,\xi)+\alpha(\sigma_s\chi_v,\xi)=(\sigma_s\PP v,\xi)
\qquad\text{for all }\xi\in\Wo.
\end{equation}
Set \(g=\PP v-\alpha\chi_v\) and \(f=\sigma_sg/r\). Testing \eqref{eq:chiv} with
\(\xi=\chi_v\) and using \(g+\alpha\chi_v=\PP v\), the minimal value of the left-hand side of
\eqref{eq:fenchel} is \(\|g\|^2_{\sigma_s}+\alpha(\sigma_sg,\chi_v)=(\sigma_sg,\PP v)\).
Failure of \eqref{eq:fenchel} therefore means \((f,\PP v)>b(v,v)\); in particular \(f\ne0\).

In terms of \(f\), \eqref{eq:chiv} reads \(b(\chi_v,\xi)=(f,\PP\xi)\) for all \(\xi\in\Wo\),
which is the equation characterizing the maximizer in the supremum defining \(Y\), see the
proof of Lemma~\ref{lem:XY}. Thus \(\chi_v\) attains that supremum and \(Y(f)=(f,\chi_v)\).
Since \(r\,m(f)=(f,g)\) and \(\PP v=g+\alpha\chi_v\), we find
\((f,\PP v)=r\,m(f)+\alpha\,Y(f)\). Testing the supremum defining \(X(f)\) with \(v\),
\[
X(f)-\alpha Y(f)-r\,m(f)\ \ge\ 2(f,\PP v)-b(v,v)-\alpha Y(f)-r\,m(f)=(f,\PP v)-b(v,v)>0 .
\]
Hence \(m(f)-X(f)<\alpha\big(m(f)-Y(f)\big)\), and \(m(f)-Y(f)>0\) by Lemma~\ref{lem:XY}, so
that the infimum in Lemma~\ref{lem:lam1} is smaller than \(\alpha\) and \(\rho(G)>r\). 
\end{remark}

Let \(S_h\), \(D_h\), \(G_h\), \(X_h\) and \(Y_h\) be defined as above with \(\Wp\), \(\Wo\) replaced by
\(\Wph\), \(\Woh\), and with \(\eta\) ranging over \(\PP\Wph=V_h\).
While \(f=\sigma_s\eta\) is not necessarily in \(V_h\),
\(X_h(f)\) and \(Y_h(f)\) are still well-defined.

\begin{proposition}\label{prop:discred}
The error representation \eqref{eq:G} and Lemmas~\ref{lem:SD} to \ref{lem:primal} hold
verbatim for the discrete iteration, with \(X\), \(Y\) replaced by \(X_h\), \(Y_h\). In
particular \(\rho(G_h)=\|G_h\|_{\star,h}\), and \(\rho(G_h)\le r\) if for every \(v\in\Wph\) there is
\(\chi\in\Woh\) with \eqref{eq:fenchel}.
\end{proposition}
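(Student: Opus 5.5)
The plan is to re-run the arguments of Section~\ref{sec:it} line by line with \(\Wp,\Wo\) replaced by \(\Wph,\Woh\) and \(L^2(0,Z)\) replaced by \(V_h=\PP\Wph\). At each step we check that only the following facts are used: coercivity and symmetry of \(a\), \(b\), \(k\); the identities \(\PP\chi=\chi\) and \(k(v,w)=(\sigma_s\PP v,\PP w)\); the inclusion of the diffusion subspace in the energy space; and the Lax--Milgram solvability of the defining problems. All of these survive restriction to a closed (here finite dimensional) subspace. The one genuinely new ingredient is a structural check. Since \(Q_N\) contains the constants, \(\Woh=V_h\otimes\mathbf 1\subset\Wph\) and \(\PP\Wph=V_h\). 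Hence \(S_h\) and \(D_h\) map \(V_h\) into \(V_h\), so \(G_h\) is an operator on \(V_h\) equipped with the (semi)inner product \((\sigma_s\cdot,\cdot)\), quotiented by its kernel as before.

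\emph{Error representation and Lemma~\ref{lem:SD}.}
Define \(u_{\ell,h}\) and \(f_h=\PP u_{\ell,h}\in V_h\). The discrete solution \(u_h\in\Wph\) of \(a(u_h,v)=\ell(v)\) satisfies \(b(u_h,v)=(\sigma_s\PP u_h,\PP v)+\ell(v)\) on \(\Wph\), so \((I-S_h)\phi_h=f_h\). The computation leading to \eqref{eq:G} then holds verbatim.

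In the proof of Lemma~\ref{lem:SD} we test with \(u_{\chi,h}\in\Wph\), which is admissible. The energy bound uses only \(\|\PP u\|_{\sigma_t}\le\|u\|_{\sigma_t}\) (Jensen's inequality pointwise in \(z\)) and \eqref{eq:energyb}. Both hold for any \(u\in\Wph\subset\Wp\).

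\emph{Lemmas~\ref{lem:selfadj}--\ref{lem:lam1}.}
Lemma~\ref{lem:selfadj} uses only the self-adjointness established in the previous step. Since \(V_h\) is finite dimensional, the spectrum consists of eigenvalues, and norm equals spectral radius trivially.

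For Lemma~\ref{lem:XY} we define \(X_h,Y_h\) as suprema over \(\Wph\) and \(\Woh\), with \(f=\sigma_s\eta\) for \(\eta\in V_h\). The maximizers are \(u_{\eta,h}\) and \(\chi_{\eta,h}\in\Woh\). The identity \(k(\chi,\xi)=(\sigma_s\chi,\xi)\) on \(\Woh\) gives \(\chi_{\eta,h}=(I+D_h)^{-1}D_h\eta\) exactly as before. The key point is that \(\chi_{\eta,h}\in V_h\), so the resolvent \((I+D_h)^{-1}\) acts within \(V_h\). The inclusion \(\Woh\subset\Wph\) yields \(Y_h\le X_h\), and Lemma~\ref{lem:lam1} follows, with Rayleigh quotients taken over \(\eta\in V_h\).

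\emph{Lemma~\ref{lem:primal}.}
The proof is pure convex duality: Cauchy--Schwarz, Young, and the definition of \(Y_h\) as a supremum over \(\Woh\). We take \(v\in\Wph\) and the given \(\chi\in\Woh\), and obtain \(X_h(f)\le r\,m(f)+(1-r)Y_h(f)\) for \(f=\sigma_s\eta\), \(\eta\in V_h\). Together with the discrete Lemma~\ref{lem:lam1}, this gives \(\rho(G_h)\le r\).

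\emph{Main obstacle.}
The only delicate point is bookkeeping. Every auxiliary object must stay in the right discrete space: \(\eta\in V_h\), the maximizers, and \((I+D_h)^{-1}\eta\). The quotient by \(\ker(\sigma_s\cdot,\cdot)\) must also be handled on \(V_h\). All of this rests on \(\mathbf 1\in Q_N\), which gives \(\PP\Wph=V_h=\Woh\). I would state this inclusion explicitly first and then reference each lemma's proof.
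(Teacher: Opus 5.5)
Your proposal is correct and follows the same route as the paper's proof. The paper likewise notes that the arguments of Section~\ref{sec:it} use only the forms, the inclusion \(\Woh\subset\Wph\) and \(\PP\chi=\chi\) on \(\Woh\), and that for \(\eta\in\PP\Wph=V_h\) the suprema defining \(X_h(f)\) and \(Y_h(f)\) are attained at the discrete solutions. You spell out the same bookkeeping in more detail (that \(S_h\), \(D_h\) and \((I+D_h)^{-1}\) preserve \(V_h\), and the quotient by the kernel of \((\sigma_s\cdot,\cdot)\)); the only slip is calling \(k\) coercive, whereas it is merely symmetric and nonnegative, which is all the argument uses.
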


\begin{proof}
The proofs use only the forms, the inclusion of the diffusion subspace in the transport space,
and the fact that \(\PP\) maps the diffusion subspace onto itself,
which holds for
\(\Woh\subset\Wph\). In Lemmas~\ref{lem:XY} and~\ref{lem:primal}, \(f=\sigma_s\eta\) with
\(\eta\in\PP\Wph=V_h\); the suprema defining \(X_h(f)\) and \(Y_h(f)\) are then attained at the
discrete solutions, which gives the identities of Lemma~\ref{lem:XY} with \(S_h\) and \(D_h\) in
place of \(S\) and \(D\).
\end{proof}

Note that \(X_h\) and \(Y_h\) are not the restrictions of \(X\) and \(Y\) to discrete arguments.
Since the suprema run over smaller spaces, both energies decrease. Because \(X\) and \(Y\) enter
the quotient of Lemma~\ref{lem:lam1}, no comparison between
\(\rho(G_h)\) and \(\rho(G)\) follows from the Courant--Fischer principle. What survives
discretization is the criterion \eqref{eq:fenchel} itself, provided \(\chi\) is constructed from
\(v\) by a map that preserves the discrete spaces.

\section{The main result}\label{sec:main}

For \(v\in\Wp\) the angular average \(\PP v\) belongs to
\(H^{1/2}(0,Z)\) by the averaging lemma \cite{GolseLionsPerthameSentis1988}, but in general not
to \(\Wo\cong H^1(0,Z)\). Therefore \(\PP\) cannot serve as an interpolant into the diffusion
subspace. Instead, for a weight \(w\) on \((-1,1)\) we consider the weighted angular average
\(\LL_wv=\PP(wv)\).
We always assume that \(w\ge0\) is even, \(w\not\equiv0\) and \(w/\mu\in L^2(-1,1)\).
The weight is a device of the analysis. Algorithm~\ref{alg:var} and its discretization are not
modified, and neither \(w\) nor the parameters chosen for it below enter the iteration.
We set
\begin{equation}\label{eq:sharpconst}
C_\partial(w)=\PP(|\mu|)\,\PP\Big(\frac{w^2}{|\mu|}\Big),\qquad
C_T(w)=\PP(\mu^2)\,\PP\Big(\frac{w^2}{\mu^2}\Big),\qquad
C_A(w)=\PP(w^2).
\end{equation}
Each of them is of the form \(\PP(\varphi)\,\PP(w^2/\varphi)\) and belongs to one term of the
energy \eqref{eq:energyb}, namely \(\varphi=|\mu|\) to the boundary term, \(\varphi=\mu^2\) to
the transport term and \(\varphi=\mathbf 1\) to the attenuation term. Here \(\varphi\) is the
weight in \(\mu\) that the term carries, and the constant comes from the Cauchy--Schwarz
inequality applied to \(wv=(w/\sqrt\varphi)\,(\sqrt\varphi\,v)\). 

\begin{lemma}\label{lem:lam}
\(\LL_w\) maps \(\Wp\) into \(\Wo\) and \(\Wph\) into \(\Woh\), and for \(v\in\Wp\),
\begin{enumerate}
\item[(i)] \(\langle\LL_wv,\LL_wv\rangle_{L^2_-}\le C_\partial(w)\,\langle v,v\rangle_{L^2_-}\);
\item[(ii)] \(\|\mu\partial_z\LL_wv\|^2_{1/\sigma_t}\le C_T(w)\,\|\mu\partial_zv\|^2_{1/\sigma_t}\);
\item[(iii)] \(\|\LL_wv\|^2_{\sigma_t}\le C_A(w)\,\|v\|^2_{\sigma_t}\).
\end{enumerate}
If in addition \(\PP(w)=1\), then \(\LL_w\) is a projection of \(\Wp\) onto \(\Wo\).
\end{lemma}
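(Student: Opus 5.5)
The plan is to prove the three estimates pointwise in \(z\) (respectively at the two boundary points) by the Cauchy--Schwarz inequality in \(\mu\), exactly as suggested by the factorization \(wv=(w/\sqrt\varphi)(\sqrt\varphi\,v)\). The mapping properties come out of the same computation.

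For the mapping property, note that \(\LL_wv\) is \(\mu\)-independent by construction. Hence \(\LL_wv\in\Wo\) once we know \(\LL_wv\in L^2(0,Z)\) and \(\partial_z\LL_wv\in L^2(0,Z)\), in view of \eqref{eq:H1}.

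Formally, \(\partial_z\LL_wv=\PP(w\,\partial_zv)=\PP\big((w/\mu)\,\mu\partial_zv\big)\). This identity should be justified in the distributional sense by testing with \(\zeta\in C_c^\infty(0,Z)\) and using Fubini. Integrability is ensured because \(w/\mu\in L^2(-1,1)\) and \(\mu\partial_zv\in L^2(\D)\).

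For the discrete statement, take \(v=\sum_j v_j(z)q_j(\mu)\) with \(v_j\in V_h\) and \(q_j\in Q_N\). Then \(\LL_wv=\sum_j\PP(wq_j)\,v_j\in V_h\cong\Woh\).

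The projection property uses \(\PP(w)=1\). For \(\chi\in\Wo\) we get \(\LL_w\chi=\chi\,\PP(w)=\chi\), so \(\LL_w\) is the identity on \(\Wo\), and \(\LL_w^2=\LL_w\) follows.

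The three estimates are handled term by term.

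\textbf{(iii)} For a.e. \(z\), \(|\PP(wv)|^2\le\PP(w^2)\,\PP(v^2)\). Multiplying by \(2\sigma_t(z)\) and integrating in \(z\) gives \(\|\LL_wv\|^2_{\sigma_t}\le C_A(w)\|v\|^2_{\sigma_t}\). Here we use that the \(\mu\)-independent function has \(L^2(\D)\)-norm equal to \(2\int_0^Z|\cdot|^2\dz\), consistent with the definition of \(\PP\) carrying the factor \(\tfrac12\).

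\textbf{(ii)} Write \(\partial_z\LL_wv=\PP\big((w/\mu)\,\mu\partial_zv\big)\). Cauchy--Schwarz gives \(|\partial_z\LL_wv|^2\le\PP(w^2/\mu^2)\,\PP\big((\mu\partial_zv)^2\big)\). Then
\[
\|\mu\partial_z\LL_wv\|^2_{1/\sigma_t}=2\PP(\mu^2)\int_0^Z\sigma_t^{-1}|\partial_z\LL_wv|^2\dz,
\]
which yields the factor \(C_T(w)\).

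\textbf{(i)} Here we need traces. We first establish the identity \((\LL_wv)(0)=\PP(w\,v(0,\cdot))\), and similarly at \(z=Z\). Evenness in \(\mu\) lets us write this as \(\int_0^1 w\,v(0,\mu)\dmu\). The splitting \(w v=(w/\sqrt{\mu})(\sqrt{\mu}\,v)\) on \((0,1)\) then gives
\[
|\LL_wv(0)|^2\le\Big(\int_0^1\frac{w^2}{\mu}\dmu\Big)\int_0^1 v(0,\mu)^2\mu\dmu .
\]
On the left, \(\langle\LL_wv,\LL_wv\rangle_{L^2_-}\) contributes \(|\LL_wv(0)|^2\int_0^1\mu\dmu\), and \(\int_0^1\mu\dmu=\PP(|\mu|)\). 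Collecting the factors, and likewise at \(z=Z\), gives \(C_\partial(w)\).

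The main obstacle is the trace identity. Traces of \(\Wp\) functions exist only in the weighted space \(L^2(|\mu|\dmu)\). I would justify the identity by density of smooth even functions in \(\Wp\), and pass to the limit using the continuity of both sides. Continuity of the right-hand side follows from the same Cauchy--Schwarz bound with \(w^2/|\mu|\) integrable, which holds since \(w/\mu\in L^2\) and \(w\) is bounded near \(\pm1\); otherwise one uses integrability directly. Continuity of the left-hand side follows from \(\Wo\cong H^1\) and the already proven (ii), (iii).
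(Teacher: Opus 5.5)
Your proposal is correct and follows the paper's proof essentially step by step. It uses the identity \(\partial_z\LL_wv=\PP\big((w/\mu)\,\mu\partial_zv\big)\) with pointwise Cauchy--Schwarz for (ii) and the mapping property, the tensor-product computation for \(\Woh\), the half-range trace formula with the splitting \((w/\sqrt\mu)(\sqrt\mu\,v)\) for (i), and \(\LL_w\chi=\PP(w)\chi\) for the projection. Your added justifications (distributional derivative via Fubini, trace identity via density) are more careful than the paper, which takes them for granted; note only that integrability of \(w^2/|\mu|\) needs no boundedness of \(w\), since \(w^2/|\mu|\le w^2/\mu^2\) on \((-1,1)\).
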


\begin{proof}
Since \(\LL_w\) acts only in \(\mu\) it commutes with \(\partial_z\), and
\(\partial_z\LL_wv=\PP\big(\tfrac w\mu\cdot\mu\partial_zv\big)\), so that the
Cauchy--Schwarz inequality gives
\begin{equation}\label{eq:ptwise}
|\partial_z\LL_wv|^2\le\PP(w^2/\mu^2)\,\PP\big(|\mu\partial_zv|^2\big)
\qquad\text{pointwise in }z .
\end{equation}
Since \(w/\mu\in L^2(-1,1)\), the factor \(\PP(w^2/\mu^2)\) is finite, so that
\(\LL_wv\in\Wo\) by \eqref{eq:H1}. For \(v=\sum_iv_i\otimes q_i\) with \(v_i\in V_h\) and
\(q_i\in Q_N\) one has \(\LL_wv=\sum_i\PP(wq_i)\,v_i\in V_h\cong\Woh\). If \(\PP(w)=1\), then
\(\LL_w\mathbf 1=\mathbf 1\).

(i) Since \(v\) and \(w\) are even, \(\LL_wv(0)=\int_0^1w(\mu)v(0,\mu)\dmu\), and the
Cauchy--Schwarz inequality with the weight \(\mu\) gives
\begin{align*}
|\LL_wv(0)|^2=\Big|\int_0^1\frac{w}{\sqrt\mu}\,\sqrt\mu\,v(0,\mu)\dmu\Big|^2
\leq \PP(w^2/|\mu|)\int_0^1\mu\,v(0,\mu)^2\dmu,
\end{align*}
and similarly at \(z=Z\). As \(\LL_wv\) is independent of \(\mu\), the left hand side of~(i) equals
\(\PP(|\mu|)\big(|\LL_wv(0)|^2+|\LL_wv(Z)|^2\big)\), and the two bounds give (i).

(ii) For \(\mu\)-independent \(\eta\) one has
\(\|\mu\partial_z\eta\|^2_{1/\sigma_t}=\tfrac23\int_0^Z\sigma_t^{-1}|\partial_z\eta|^2\dz\) by
\eqref{eq:H1}, while \(\|\mu\partial_zv\|^2_{1/\sigma_t}=2\int_0^Z\sigma_t^{-1}
\PP\big((\mu\partial_zv)^2\big)\dz\). Thus, by \eqref{eq:ptwise},
\begin{align*}
\|\mu\partial_z\LL_wv\|^2_{1/\sigma_t}
&=2\PP(\mu^2)\int_0^Z\frac{|\partial_z\LL_wv|^2}{\sigma_t}\dz
\le2\PP(\mu^2)\PP(w^2/\mu^2)\int_0^Z\frac{\PP\big((\mu\partial_zv)^2\big)}{\sigma_t}\dz\\
&=C_T(w)\,\|\mu\partial_zv\|^2_{1/\sigma_t}.
\end{align*}

(iii) We have \(|\LL_wv|^2=|\PP(wv)|^2\le C_A(w)\,\PP(v^2)\) pointwise
in \(z\). Multiplying by \(2\sigma_t\) and integrating in \(z\) gives (iii), because
\(\|\eta\|^2_{\sigma_t}=2\int_0^Z\sigma_t\PP(\eta^2)\dz\).
\end{proof}

\subsection{Conditions for the weight}\label{sec:cond}

Throughout this subsection, \(0<r<1\) and \(\alpha=1-r\). On \(L^2(-1,1)\) we consider the collision
form
\begin{equation}\label{eq:collform}
\QQ(f,g)=c\,\theta(f)\,\theta(g)+r\alpha\,\PP(wf)\,\PP(wg)-r\,\PP(fg),
\,\,\,\,
\theta(g)=\PP g-\alpha\PP(wg),
\end{equation}
and the matrices
\[
N_w=c\begin{pmatrix}1&-\alpha\\-\alpha&\alpha^2\end{pmatrix}
+r\alpha\begin{pmatrix}0&0\\0&1\end{pmatrix} ,
\qquad
\Gamma_w=\begin{pmatrix}1&\PP(w)\\ \PP(w)&\PP(w^2)\end{pmatrix} .
\]
Here \(\Gamma_w\) is the Gram matrix of \(\{\mathbf 1,w\}\) with respect to \(\PP(\cdot\,\cdot)\). 
Since \(w/\mu\in L^2(-1,1)\) and \(w\neq 0\), \(w\) and \(\mathbf 1\) are linearly independent and 
\(\Gamma_w\) is positive definite.

\begin{lemma}\label{lem:coll}
The following three statements are equivalent:
\begin{enumerate}
\item[(i)] \(\QQ(g,g)\le0\) for all \(g\in L^2(-1,1)\), that is,
\begin{equation}\label{eq:collw}
c\,\big|\PP g-\alpha\PP(wg)\big|^2+r\alpha\big|\PP(wg)\big|^2\ \le\ r\,\PP(g^2)
\qquad\text{for all }g\in L^2(-1,1);
\end{equation}
\item[(ii)] \(\QQ(g,g)\le0\) for all \(g\) in the two dimensional space
\(\mathcal V=\operatorname{span}\{\mathbf 1,w\}\);
\item[(iii)] \(\lambda_{\max}(N_w\Gamma_w)\le r\).
\end{enumerate}
\end{lemma}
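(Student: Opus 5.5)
The plan is to prove (i)\(\Rightarrow\)(ii)\(\Rightarrow\)(i) by an orthogonal decomposition, and then (ii)\(\Leftrightarrow\)(iii) by a generalized eigenvalue computation on \(\mathcal V\). The implication (i)\(\Rightarrow\)(ii) is trivial, since \(\mathcal V\subset L^2(-1,1)\); for this one only needs \(w\in L^2(-1,1)\), which follows from \(w/\mu\in L^2\).

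For (ii)\(\Rightarrow\)(i), write \(g=g_1+g_2\), where \(g_1\in\mathcal V\) is the \(\PP(\cdot\,\cdot)\)-orthogonal projection of \(g\) onto \(\mathcal V\) and \(g_2\perp\mathcal V\). Orthogonality to \(\mathbf 1\) and to \(w\) means \(\PP g_2=0\) and \(\PP(wg_2)=0\). Hence \(\theta(g_2)=0\) and \(\PP(wg)=\PP(wg_1)\), so the first two terms of \(\QQ(g,g)\) depend only on \(g_1\). Meanwhile \(\PP(g^2)=\PP(g_1^2)+\PP(g_2^2)\). This gives
\[
\QQ(g,g)=\QQ(g_1,g_1)-r\,\PP(g_2^2)\le\QQ(g_1,g_1)\le0 .
\]

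For (ii)\(\Leftrightarrow\)(iii), parametrize \(g=x_1\mathbf 1+x_2w\) with \(x\in\mathbb R^2\). Then \(\PP(g^2)=x^\top\Gamma_w x\). The first two terms of \(\QQ\) are quadratic in the vector \(y=(\PP g,\PP(wg))^\top\), and \(y=\Gamma_w x\) because \(\Gamma_w\) is the Gram matrix of \(\{\mathbf 1,w\}\). Expanding,
\[
c(y_1-\alpha y_2)^2+r\alpha y_2^2=y^\top N_w y,
\]
which one checks directly from the definition of \(N_w\). Therefore
\[
\QQ(g,g)=x^\top\Gamma_w N_w\Gamma_w x-r\,x^\top\Gamma_w x ,
\]
and (ii) is equivalent to the matrix inequality \(\Gamma_w N_w\Gamma_w\le r\Gamma_w\). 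Since \(\Gamma_w\) is symmetric positive definite, substitute \(x=\Gamma_w^{-1/2}z\). The inequality becomes \(\Gamma_w^{1/2}N_w\Gamma_w^{1/2}\le rI\), that is, \(\lambda_{\max}(\Gamma_w^{1/2}N_w\Gamma_w^{1/2})\le r\). This symmetric matrix is similar to \(N_w\Gamma_w\), so the two have the same real eigenvalues, and (iii) follows; the argument reverses step by step.

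No step is a real obstacle. The points to state carefully are the identity \(y=\Gamma_w x\) together with the correct matrix form of the quadratic, and the similarity argument showing that \(N_w\Gamma_w\) has real spectrum. The latter is needed for \(\lambda_{\max}\) to make sense even though \(N_w\Gamma_w\) is not itself symmetric.
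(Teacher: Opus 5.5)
Your proposal is correct and follows the paper's proof essentially step for step: the $\PP(\cdot\,\cdot)$-orthogonal decomposition onto $\mathcal V$ gives (ii)$\Rightarrow$(i), and for (ii)$\Leftrightarrow$(iii) you use the Gram-matrix identity $\QQ(g,g)=x^\top(\Gamma_wN_w\Gamma_w-r\Gamma_w)x$, then congruence with $\Gamma_w^{-1/2}$, then the similarity of $\Gamma_w^{1/2}N_w\Gamma_w^{1/2}$ and $N_w\Gamma_w$. Your remark that this similarity is what makes the spectrum of $N_w\Gamma_w$ real, so that $\lambda_{\max}$ is meaningful, is a small clarification the paper leaves implicit.
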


\begin{proof}
(i)\(\Rightarrow\)(ii) is trivial. For (ii)\(\Rightarrow\)(i), decompose \(g=g_0+h\) with
\(g_0\in\mathcal V\) and \(h\) orthogonal to \(\mathcal V\) with respect to \(\PP(\cdot\,\cdot)\). Then
\(\PP h=\PP(wh)=0\), hence \(\theta(h)=0\) and \(\PP(g_0h)=0\), so that
\(\QQ(g,g)=\QQ(g_0,g_0)-r\,\PP(h^2)\le\QQ(g_0,g_0)\).

For (ii)\(\Leftrightarrow\)(iii) we use the basis \(\{\mathbf 1,w\}\) of \(\mathcal V\).
For \(g=y_1+y_2w\) and
\(y=(y_1,y_2)^\top\),
\[
x=\begin{pmatrix}\PP g\\ \PP(wg)\end{pmatrix}=\Gamma_wy,\qquad \PP(g^2)=y^\top\Gamma_wy,
\]
so that
\[
\QQ(g,g)=x^\top N_wx-r\,y^\top\Gamma_wy=y^\top\big(\Gamma_wN_w\Gamma_w-r\Gamma_w\big)y .
\]
Hence (ii) holds if and only if \(\Gamma_wN_w\Gamma_w\le r\,\Gamma_w\), which
 is equivalent to \(\Gamma_w^{1/2}N_w\Gamma_w^{1/2}\le rI\),
that is, to (iii), because \(\Gamma_w^{1/2}N_w\Gamma_w^{1/2}\) and \(N_w\Gamma_w\) have the same
eigenvalues.
\end{proof}

\begin{theorem}\label{thm:halfw}
Let \(\alpha=1-r\) for \(r\in(0,1)\) and assume
\begin{equation}\label{eq:H}
\alpha\,C_\partial(w)\le 1,\qquad
\alpha\,C_T(w)\le1 ,\qquad
\QQ(g,g)\le0\quad\text{for all }g\in L^2(-1,1),
\end{equation}
with \(\QQ\) as in \eqref{eq:collform}. Then \(\rho(G)\le r\) and \(\rho(G_h)\le r\).
\end{theorem}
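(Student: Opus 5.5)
The plan is to verify the primal criterion \eqref{eq:fenchel} of Lemma~\ref{lem:primal} with the explicit choice \(\chi=\LL_wv\). Lemma~\ref{lem:lam} guarantees that \(\chi\in\Wo\) for \(v\in\Wp\) and \(\chi\in\Woh\) for \(v\in\Wph\). Then Lemma~\ref{lem:primal} gives \(\rho(G)\le r\), and Proposition~\ref{prop:discred} gives \(\rho(G_h)\le r\), with the same argument applied verbatim. Note that \(\PP(w)=1\) is not needed, since \(\LL_w\) enters only as a map into the diffusion subspace, not as a projection.

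To verify \eqref{eq:fenchel}, I split \(b(\chi,\chi)\) according to the three terms of the energy \eqref{eq:energyb} and treat each separately.

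\emph{Boundary term.} Lemma~\ref{lem:lam}(i) together with \(\alpha C_\partial(w)\le1\) gives
\[
r\alpha\cdot2\langle\chi,\chi\rangle_{L^2_-}\le r\cdot 2\langle v,v\rangle_{L^2_-}.
\]

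\emph{Transport term.} Lemma~\ref{lem:lam}(ii) together with \(\alpha C_T(w)\le1\) gives
\[
r\alpha\|\mu\partial_z\chi\|^2_{1/\sigma_t}\le r\|\mu\partial_zv\|^2_{1/\sigma_t}.
\]

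\emph{Remaining inequality.} It is therefore enough to show
\[
\|\PP v-\alpha\LL_wv\|^2_{\sigma_s}+r\alpha\|\LL_wv\|^2_{\sigma_t}\le r\|v\|^2_{\sigma_t}.
\]
Here the attenuation part is not estimated separately through Lemma~\ref{lem:lam}(iii). Instead, it is combined with the scattering term through the collision form \(\QQ\).

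For a.e.\ \(z\), let \(g=v(z,\cdot)\in L^2(-1,1)\); this holds by Fubini since \(v\in L^2(\D)\). Then \(\PP v(z)-\alpha\LL_wv(z)=\theta(g)\) and \(\LL_wv(z)=\PP(wg)\). Both functions are \(\mu\)-independent, so \(\|\eta\|^2_\sigma=2\int_0^Z\sigma\,\PP(\eta^2)\dz\) reduces to \(2\int_0^Z\sigma|\eta|^2\dz\). Using \(\sigma_s\le c\,\sigma_t\) a.e., the left-hand side is bounded by
\[
2\int_0^Z\sigma_t\big(c|\theta(g)|^2+r\alpha|\PP(wg)|^2\big)\dz .
\]
The third hypothesis in \eqref{eq:H}, in the form \eqref{eq:collw}, bounds the integrand pointwise by \(r\,\sigma_t\PP(g^2)\). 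Integrating gives \(2r\int_0^Z\sigma_t\PP(v^2)\dz=r\|v\|^2_{\sigma_t}\). Adding the three estimates yields \eqref{eq:fenchel}.

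I do not expect a serious obstacle here. The main points of care are these:
\begin{itemize}
\item the pointwise-in-\(z\) application of \eqref{eq:collw} needs the measurability of \(g\mapsto\theta(g),\PP(wg)\), which is fine since these are integrals in \(\mu\);
\item the replacement of \(\sigma_s\) by \(c\sigma_t\) must be applied only to the scattering term, not to the whole left-hand side;
\item in the discrete case, the criterion of Proposition~\ref{prop:discred} requires only \(v\in\Wph\) and \(\chi\in\Woh\), which Lemma~\ref{lem:lam} provides.
\end{itemize}
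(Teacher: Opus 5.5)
Your proposal is correct and follows the paper's proof essentially step by step. It uses the choice \(\chi=\LL_wv\), splits \eqref{eq:fenchel} into boundary, transport, and combined scattering--attenuation inequalities, applies Lemma~\ref{lem:lam}(i),(ii) with the \(C_\partial\)- and \(C_T\)-conditions, and reduces the last inequality pointwise in \(z\) to \eqref{eq:collw} via \(\sigma_s\le c\,\sigma_t\), with the discrete case handled exactly as in the paper through Proposition~\ref{prop:discred} and \(\LL_w\Wph\subset\Woh\).
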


\begin{proof}
Let \(v\in\Wp\) and \(\chi=\LL_wv\in\Wo\), which is admissible by Lemma~\ref{lem:lam}. By
\eqref{eq:energyb}, the criterion \eqref{eq:fenchel} reads
\begin{multline*}
\|\PP v-\alpha\chi\|^2_{\sigma_s}
+r\alpha\Big(2\langle\chi,\chi\rangle_{L^2_-}+\|\mu\partial_z\chi\|^2_{1/\sigma_t}
+\|\chi\|^2_{\sigma_t}\Big)\\
\le r\Big(2\langle v,v\rangle_{L^2_-}+\|\mu\partial_zv\|^2_{1/\sigma_t}+\|v\|^2_{\sigma_t}\Big),
\end{multline*}
and it suffices to verify the three inequalities
\begin{gather*}
\alpha\langle\chi,\chi\rangle_{L^2_-}\le\langle v,v\rangle_{L^2_-},\qquad
\alpha\|\mu\partial_z\chi\|^2_{1/\sigma_t}\le\|\mu\partial_zv\|^2_{1/\sigma_t},\\
\|\PP v-\alpha\chi\|^2_{\sigma_s}+r\alpha\|\chi\|^2_{\sigma_t}\le r\|v\|^2_{\sigma_t},
\end{gather*}
since multiplying the first two by \(r\) and adding the third gives the displayed inequality.
The first two inequalities are Lemma~\ref{lem:lam}(i),(ii) combined with the
\(C_\partial\)- and the \(C_T\)-condition in \eqref{eq:H}.
For the third inequality, all three terms are integrals over \(z\) of quantities that depend only on
\(g=v(z,\cdot)\in L^2(-1,1)\), for almost every \(z\). Indeed \(\PP v(z)=\PP g\),
\(\chi(z)=\PP(wg)\), and \(\|\eta\|^2_\sigma=2\int_0^Z\sigma|\eta|^2\dz\) for \(\mu\)-independent
\(\eta\), while \(\|v\|^2_{\sigma_t}=2\int_0^Z\sigma_t\PP(g^2)\dz\). Since \(\sigma_s\le c\,\sigma_t\),
it therefore suffices to show \eqref{eq:collw}, which is the \(\QQ\)-condition in
\eqref{eq:H}.

By Lemma~\ref{lem:primal}, \(\rho(G)\le r\). For the discrete iteration nothing changes.
 The first two inequalities hold for every \(v\in\Wp\), hence for \(v\in\Wph\).
 Condition \eqref{eq:collw} is a pointwise statement
in \(z\) for arbitrary \(g\), and \(\chi=\LL_wv\in\Woh\) by Lemma~\ref{lem:lam}. Hence
\(\rho(G_h)\le r\) by Proposition~\ref{prop:discred}.
\end{proof}

\begin{remark}\label{rem:half}
The simplest admissible weight already gives a good rate, and all quantities of
Theorem~\ref{thm:halfw} are then explicit. For \(w=2\beta|\mu|\) with \(\beta=3/(4-c)\) and
\(r=c/4\) we have \(\alpha=(4-c)/4\) and \(\alpha\beta=\tfrac34\). 
The moments in \eqref{eq:sharpconst} are
\begin{align*}
\PP(w)=\beta,\quad \PP(w^2)=\tfrac43\beta^2,\quad
\PP(w^2/|\mu|)=2\beta^2,\quad \PP(w^2/\mu^2)=4\beta^2,
\end{align*}
so that \(C_\partial(w)=\beta^2\) and \(C_A(w)=C_T(w)=\tfrac43\beta^2\), while
\(\Gamma_w=\left(\begin{smallmatrix}1&\beta\\\beta&\tfrac43\beta^2\end{smallmatrix}\right)\).
Hence,
\(\alpha\,C_\partial(w)\leq \frac34\), and 
\(\alpha\,C_T(w)\le1\).
The eigenvalues of \(N_w\Gamma_w\) are \(c/4\) and \(3c/(4(4-c))\), both at most \(c/4\), so that
\eqref{eq:H} holds by Lemma~\ref{lem:coll}. Theorem~\ref{thm:halfw} therefore gives
\[
\rho(G)\le\frac c4\qquad\text{and}\qquad\rho(G_h)\le\frac c4 ,
\]
with the interpolant \(\chi=\beta\LL_{2|\mu|}v\). 
\end{remark}
Remark~\ref{rem:half} shows that  \(c/4\) is a robust bound obtained from a simple universal weight,
 while the remainder of Section~\ref{sec:main} constructs a \(c\)-dependent weight that recovers the sharper infinite-medium rate.

\subsection{A weight yielding \texorpdfstring{\(\rho_\infty(c)\)}{rho\_infty(c)}}\label{sec:sharp:opt}
We now choose \(w\) so that the three conditions of Theorem~\ref{thm:halfw} are satisfied with \(r=\rho_\infty(c)\).
Let us introduce the functions
\begin{equation}\label{eq:sfdef}
A(\lambda)=\frac{\arctan\lambda}{\lambda},\quad
B(\lambda)=\frac1{1+\lambda^2},\quad
Y_\infty(\lambda)=\frac{3}{3+\lambda^2},
\end{equation}
where here and throughout this subsection, we assume \(\lambda>0\).
\(A\) is the symbol of the transport sweep and \(Y_\infty\) the
symbol of the diffusion correction for the infinite homogeneous medium, and a short
computation rewrites the symbol \eqref{eq:alsymbol} of \cite{AdamsLarsen02} as
\begin{equation}\label{eq:omdef}
\omega_c=\frac{A-Y_\infty}{1/c-Y_\infty}=\frac{c\,(A-Y_\infty)}{1-c\,Y_\infty},
\qquad\text{so that}\qquad
\rho_\infty(c)=\sup_{\lambda>0}\omega_c(\lambda).
\end{equation}
We state some basic properties of \(A\), \(B\), \(Y_\infty\) and \(\omega_c\) for later reference.
\begin{lemma}\label{lem:sym}
We have
\begin{enumerate}
\item[(i)] \(0<B<A<1\) as well as \(A>Y_\infty\).
\item[(ii)] \(\omega_c>0\) on \((0,\infty)\) and \(\omega_c(\lambda)\to0\) as \(\lambda\to\infty\),
for every \(c\in(0,1]\).
\end{enumerate}
\end{lemma}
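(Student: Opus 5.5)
We prove each inequality by elementary calculus on \((0,\infty)\), then combine them for (ii).

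For (i), \(0<B\) is obvious. For \(A<1\) we use \(\arctan\lambda<\lambda\) for \(\lambda>0\), which holds because \(\frac{d}{d\lambda}(\lambda-\arctan\lambda)=\lambda^2/(1+\lambda^2)>0\) and the difference vanishes at \(0\). For \(B<A\) we need \(\arctan\lambda>\lambda/(1+\lambda^2)\). Set \(h(\lambda)=\arctan\lambda-\lambda/(1+\lambda^2)\). Then \(h(0)=0\) and
\[
h'(\lambda)=\frac1{1+\lambda^2}-\frac{1-\lambda^2}{(1+\lambda^2)^2}=\frac{2\lambda^2}{(1+\lambda^2)^2}>0 .
\]
For \(A>Y_\infty\) we need \(\arctan\lambda>3\lambda/(3+\lambda^2)\). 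Set \(k(\lambda)=\arctan\lambda-3\lambda/(3+\lambda^2)\), so \(k(0)=0\) and
\[
k'(\lambda)=\frac1{1+\lambda^2}-\frac{3(3-\lambda^2)}{(3+\lambda^2)^2}
=\frac{(3+\lambda^2)^2-3(3-\lambda^2)(1+\lambda^2)}{(1+\lambda^2)(3+\lambda^2)^2}.
\]
The numerator expands to \(9+6\lambda^2+\lambda^4-9-6\lambda^2+3\lambda^4=4\lambda^4>0\). Hence \(k>0\) on \((0,\infty)\). This Padé-type comparison is the only step needing an actual computation, and it is routine. One could instead use the Taylor series \(\arctan\lambda=\lambda-\lambda^3/3+\dots\), but that only covers small \(\lambda\), so the derivative argument is preferable because it is global.

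For (ii), recall from \eqref{eq:omdef} that \(\omega_c=c(A-Y_\infty)/(1-cY_\infty)\). Since \(0<Y_\infty<1\) for \(\lambda>0\) and \(c\le1\), we have \(1-cY_\infty>1-c\ge0\), with strict positivity of the denominator even when \(c=1\). The numerator \(c(A-Y_\infty)\) is positive by (i), so \(\omega_c>0\).

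For the limit as \(\lambda\to\infty\): \(A(\lambda)\le\pi/(2\lambda)\to0\) and \(Y_\infty(\lambda)\to0\). Hence the numerator tends to \(0\) and the denominator tends to \(1\), which gives \(\omega_c(\lambda)\to0\). Equivalently, in the form \eqref{eq:alsymbol}, the bracket behaves like \(\pi\lambda/6\) while the prefactor behaves like \(3c/\lambda^2\), so \(\omega_c\sim\pi c/(2\lambda)\). Neither part has a real obstacle beyond the polynomial identity \(4\lambda^4\) in the proof of \(A>Y_\infty\).
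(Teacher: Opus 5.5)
Your proof is correct and follows the paper's argument. For (i), the paper also notes that \(\lambda A\), \(\lambda B\), \(\lambda Y_\infty\) vanish at \(0\) and compares their derivatives, which you carry out explicitly, including the numerator \(4\lambda^4\). For (ii), the paper likewise uses positivity of \(A-Y_\infty\) and of \(1-cY_\infty\) together with \(A,Y_\infty\to0\). One cosmetic point: your auxiliary functions \(h\) and \(k\) clash with the paper's \(h\) in \eqref{eq:sfh} and its bilinear form \(k\), so rename them.
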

\begin{proof}
(i) Clearly \(B>0\) and \(A<1\). The functions \(\lambda A\), \(\lambda B\) and \(\lambda Y_\infty\)
vanish at \(\lambda=0\), and (i) follows from \((\lambda A)'>(\lambda B)'\) and
\((\lambda A)'>(\lambda Y_\infty)'\).

(ii) Using (i) and \(cY_\infty<1\) shows \(\omega_c>0\). Both \(A\) and \(Y_\infty\) tend to \(0\) as
\(\lambda\to\infty\), and so does \(\omega_c\).
\end{proof}
For an isotropic plane wave of frequency \(\lambda\), the infinite-domain sweep produces and angular profile with even part \(g_\lambda\), whose average is the scalar-flux symbol \(A\) in \eqref{eq:sfdef}.
For parameters $\lambda>0$ and $\kappa>0$, which we fix below, we take the weight proportional to the complementary profile,
\begin{equation}\label{eq:wlam}
w_\lambda=\kappa\,(1-g_\lambda)=\kappa\,\frac{\lambda^2\mu^2}{1+\lambda^2\mu^2},
\,\,\,\,\text{with } g_\lambda(\mu)=\frac1{1+\lambda^2\mu^2}.
\end{equation}
Each \(w_\lambda\) is even, nonnegative and satisfies \(w_\lambda/\mu\in L^\infty(-1,1)\), so the previous results apply.
The following identities follow by direct integration.

\begin{lemma}\label{lem:mom}
For every \(\lambda>0\), with \(A=A(\lambda)\) and \(B=B(\lambda)\),
\begin{enumerate}
    \item[(i)] \(\PP(g_\lambda)=A\), \(\PP(g_\lambda^2)=\tfrac12(A+B)\).
    \item[(ii)] \(\PP(1-g_\lambda)=1-A\), \(\PP\big((1-g_\lambda)^2\big)=1-\tfrac12(3A-B)\), \(\PP\big(g_\lambda(1-g_\lambda)\big)=\tfrac12(A-B)\)
    \item[(iii)] \(\PP\big((1-g_\lambda)^2/\mu^2\big)=\tfrac12\lambda^2(A-B)\)
    \item[(iv)] \(\PP\big((1-g_\lambda)^2/|\mu|\big)=\tfrac12\big(\ln(1+\lambda^2)+B-1\big)\).
\end{enumerate}
\end{lemma}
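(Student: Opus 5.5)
Every identity reduces to an elementary integral in \(\mu\) over \((0,1)\). Since all integrands are even, \(\PP(\varphi)=\int_0^1\varphi(\mu)\dmu\). The plan is to write each integrand as a combination of \(g_\lambda\), \(g_\lambda^2\) and simple rational terms, and then use two base integrals. With \(t=\lambda\mu\), the first is
\[
\int_0^1\frac{\dmu}{1+\lambda^2\mu^2}=\frac{\arctan\lambda}{\lambda}=A,
\]
which is the first part of (i). The second is \(\int_0^1(1+\lambda^2\mu^2)^{-2}\dmu\). For it we use the antiderivative
\[
\int\frac{\mathrm{d}t}{(1+t^2)^2}=\tfrac12\Big(\frac{t}{1+t^2}+\arctan t\Big).
\]
Dividing by \(\lambda\) gives \(\tfrac12\big(\tfrac{1}{1+\lambda^2}+A\big)=\tfrac12(A+B)\), which is the second part of (i).

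Part (ii) follows by expanding the products and using linearity together with (i):
\begin{align*}
\PP(1-g_\lambda)&=1-A,\\
\PP\big((1-g_\lambda)^2\big)&=1-2A+\tfrac12(A+B)=1-\tfrac12(3A-B),\\
\PP\big(g_\lambda(1-g_\lambda)\big)&=A-\tfrac12(A+B)=\tfrac12(A-B).
\end{align*}

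For (iii), observe that \((1-g_\lambda)/\mu^2=\lambda^2 g_\lambda\), since \(1-g_\lambda=\lambda^2\mu^2g_\lambda\). Hence \((1-g_\lambda)^2/\mu^2=\lambda^2 g_\lambda(1-g_\lambda)\), and the claim is the last identity of (ii) multiplied by \(\lambda^2\). For (iv), write \((1-g_\lambda)^2/\mu=\lambda^2\mu\, g_\lambda(1-g_\lambda)=\lambda^4\mu^3/(1+\lambda^2\mu^2)^2\). Substituting \(s=1+\lambda^2\mu^2\), so that \(\mathrm{d}s=2\lambda^2\mu\dmu\) and \(\lambda^2\mu^2=s-1\), gives
\[
\int_0^1\frac{\lambda^4\mu^3}{(1+\lambda^2\mu^2)^2}\dmu=\frac12\int_1^{1+\lambda^2}\frac{s-1}{s^2}\,\mathrm{d}s
=\frac12\Big(\ln(1+\lambda^2)+\frac1{1+\lambda^2}-1\Big),
\]
which is (iv).

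None of these steps poses a real obstacle. The only places needing care are the reduction formula for \(\int(1+t^2)^{-2}\) behind the \(g_\lambda^2\) moment, and keeping track of the factor \(\tfrac12\) from \(\PP\) versus the symmetric integral over \((0,1)\).
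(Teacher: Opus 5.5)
Your proposal is correct and is essentially the paper's approach: the paper only states that the identities follow by direct integration, and your computation carries out exactly those integrals. The reduction \(1-g_\lambda=\lambda^2\mu^2 g_\lambda\), which turns (iii) into a multiple of the last moment in (ii), and the substitution \(s=1+\lambda^2\mu^2\) for (iv) are clean ways to organize that integration.
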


Our aim is to apply Theorem~\ref{thm:halfw} with \(r=\rho_\infty(c)\), which requires bounds
for \(C_\partial(w_\lambda)\) and \(C_T(w_\lambda)\).
For \(w=w_\lambda\), Lemma~\ref{lem:mom} yields
\begin{align}
    \label{eq:momw2}
    C_\partial(w_\lambda)&=\tfrac14\kappa^2\big(\ln(1+\lambda^2)+B-1\big),\quad
    C_T(w_\lambda)=\tfrac16\kappa^2\lambda^2(A-B),
    \\\label{eq:momw}
    C_A(w_\lambda)&=\kappa^2\big(1-\tfrac12(3A-B)\big),\,\,\,
    \PP(w_\lambda)=\kappa(1-A),\,\,\,
    \PP(w_\lambda g_\lambda)=\tfrac12\kappa(A-B).
\end{align}
The verification of the \(\QQ\)-condition in \eqref{eq:H} will be done by
investigating the sign of
\begin{equation}\label{eq:sfh}
h(\lambda)=(A-B)(3+\lambda^2)-6(1-A).
\end{equation}
Comparing the constants of Lemma~\ref{lem:lam} amounts to comparing the moments
\(\PP(w_\lambda^2/\varphi)\) of one and the same function \(w_\lambda^2\). 
The transport constant \(C_T\) will be the limiting constant in the sharp construction.
We therefore normalize the angular measure associated with \(w_\lambda^2/\mu^2\), and set
\begin{equation}\label{eq:nudef}
\nu_\lambda=\frac{w_\lambda^2/\mu^2}{\PP\big(w_\lambda^2/\mu^2\big)}
=\frac{2\lambda^2\mu^2g_\lambda^2}{A-B},
\qquad M_\lambda(\varphi)=\PP(\nu_\lambda\varphi),
\end{equation}
where we used Lemma~\ref{lem:mom}(iii).
Clearly, \(\nu_\lambda\ge0\) is even and \(M_\lambda(\mathbf 1)=1\). The next lemma expresses the
comparisons with \(C_T(w_\lambda)\) and the sign of \(h\) as averages against \(\nu_\lambda\), which
are monotone in \(\lambda\).

\begin{lemma}\label{lem:nu}
We have
\begin{equation}\label{eq:CTratio}
\frac{C_\partial(w_\lambda)}{C_T(w_\lambda)}=\frac{\PP(|\mu|)}{\PP(\mu^2)}\,M_\lambda(|\mu|),
\qquad
\frac{C_A(w_\lambda)}{C_T(w_\lambda)}=\frac{M_\lambda(\mu^2)}{\PP(\mu^2)} ,
\end{equation}
and
\begin{equation}\label{eq:hM}
h=3\lambda^2(A-B)\big(\PP(\mu^2)-M_\lambda(\mu^2)\big) .
\end{equation}
Moreover, \(\lambda\mapsto M_\lambda(\varphi)\) is strictly decreasing on \((0,\infty)\) for every
even \(\varphi\) that is strictly increasing on \((0,1)\).
\end{lemma}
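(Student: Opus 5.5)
The two ratio identities in \eqref{eq:CTratio} follow directly from the definitions. Every constant in \eqref{eq:sharpconst} has the form \(\PP(\varphi)\,\PP(w_\lambda^2/\varphi)\). Writing \(w_\lambda^2/\varphi=(w_\lambda^2/\mu^2)\,(\mu^2/\varphi)\) and using the definition \eqref{eq:nudef} of \(\nu_\lambda\), we get
\[
\PP(w_\lambda^2/\varphi)=\PP\big(w_\lambda^2/\mu^2\big)\,M_\lambda(\mu^2/\varphi).
\]
Taking \(\varphi=|\mu|\) gives \(C_\partial=\PP(|\mu|)\,\PP(w_\lambda^2/\mu^2)\,M_\lambda(|\mu|)\). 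Dividing by \(C_T=\PP(\mu^2)\,\PP(w_\lambda^2/\mu^2)\) yields the first identity. Taking \(\varphi=\mathbf 1\) gives \(C_A=\PP(w_\lambda^2/\mu^2)\,M_\lambda(\mu^2)\), and dividing by \(C_T\) yields the second identity.

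For \eqref{eq:hM} I compute \(M_\lambda(\mu^2)\) explicitly. By \eqref{eq:nudef},
\[
M_\lambda(\mu^2)=\frac{2\lambda^2\,\PP(\mu^4g_\lambda^2)}{A-B},
\qquad
\lambda^4\mu^4g_\lambda^2=(1-g_\lambda)^2 .
\]
Lemma~\ref{lem:mom}(ii) then gives
\[
\lambda^2(A-B)\,M_\lambda(\mu^2)=2\big(1-\tfrac12(3A-B)\big)=2-3A+B.
\]
Substituting into the right-hand side of \eqref{eq:hM} with \(\PP(\mu^2)=\tfrac13\) gives
\[
3\lambda^2(A-B)\big(\tfrac13-M_\lambda(\mu^2)\big)=\lambda^2(A-B)-6+9A-3B.
\]
Expanding \(h=(A-B)(3+\lambda^2)-6(1-A)\) gives \(\lambda^2(A-B)+3A-3B-6+6A\), which is the same expression. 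This is routine bookkeeping.

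The main point is the monotonicity. For \(\mu\in(0,1)\) I set \(s=\lambda^2\), so that \(\nu_\lambda\propto p_s(\mu)=s\mu^2/(1+s\mu^2)^2\), normalized on \((0,1)\); evenness lets me restrict to \((0,1)\). The plan is to show that the family of probability densities \(\nu_\lambda\) is ordered by likelihood ratio. For \(s_1<s_2\), the ratio
\[
\frac{p_{s_2}}{p_{s_1}}\propto\Big(\frac{1+s_1\mu^2}{1+s_2\mu^2}\Big)^2
\]
is strictly decreasing in \(\mu\). Indeed, \((1+s_1t)/(1+s_2t)\) has derivative of sign \(s_1-s_2<0\) in \(t=\mu^2\). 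A strict monotone likelihood ratio implies strict first-order stochastic dominance: \(\nu_{\lambda_2}\) puts relatively more mass near \(\mu=0\).

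Concretely, I would write
\[
M_{\lambda_2}(\varphi)-M_{\lambda_1}(\varphi)=\int_0^1\varphi\,(q_2-q_1)\dmu,
\]
where \(q_i\) are the normalized densities. Since the ratio \(q_2/q_1\) is strictly decreasing and both integrate to one, \(q_2-q_1\) changes sign exactly once, from positive to negative, at some point \(\mu_*\). Then
\[
\int_0^1\varphi\,(q_2-q_1)\dmu=\int_0^1\big(\varphi-\varphi(\mu_*)\big)(q_2-q_1)\dmu<0,
\]
because the integrand is nonpositive everywhere and strictly negative on a set of positive measure when \(\varphi\) is strictly increasing. This gives strict decrease in \(\lambda\). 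The only care needed is checking that the single sign change and the strictness hold, which follow from strict monotonicity of the ratio and continuity of the densities.
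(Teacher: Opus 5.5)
Your proposal is correct and follows the paper's proof. The ratio identities come from factoring $w_\lambda^2/\varphi=(w_\lambda^2/\mu^2)(\mu^2/\varphi)$, the identity \eqref{eq:hM} comes from $\lambda^2(A-B)M_\lambda(\mu^2)=2-3A+B$ via Lemma~\ref{lem:mom}, and the monotonicity comes from the strictly decreasing ratio $\nu_{\tilde\lambda}/\nu_\lambda\propto\big((1+\lambda^2\mu^2)/(1+\tilde\lambda^2\mu^2)\big)^2$. The only difference is the closing step: you use a single-crossing argument with the pivot $\varphi(\mu_*)$, while the paper writes $M_{\tilde\lambda}(\varphi)-M_\lambda(\varphi)$ as the covariance $\tfrac12\iint(\varphi(\mu)-\varphi(\mu'))(\rho(\mu)-\rho(\mu'))\,d\nu_\lambda\,d\nu_\lambda$ of an increasing and a decreasing function, which avoids locating the crossing point.
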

\begin{proof}
By \eqref{eq:nudef}, \(M_\lambda(\mu^2/\varphi)=\PP(w_\lambda^2/\varphi)/\PP(w_\lambda^2/\mu^2)\)
for \(\varphi=\mathbf 1\) and \(\varphi=|\mu|\), which gives \eqref{eq:CTratio} by
\eqref{eq:sharpconst}. Lemma~\ref{lem:mom}(ii),(iii) gives
\(\lambda^2(A-B)\,M_\lambda(\mu^2)=2-3A+B\), and \(A>B\) by Lemma~\ref{lem:sym}(i); inserting
this into \eqref{eq:hM} and using \(\PP(\mu^2)=\tfrac13\) yields \eqref{eq:sfh}.

Let \(0<\lambda<\tilde\lambda\). By \eqref{eq:nudef}, \(\nu_{\tilde\lambda}=\rho\,\nu_\lambda\)
with \(\rho\) a positive multiple of \(\big((1+\lambda^2\mu^2)/(1+\tilde\lambda^2\mu^2)\big)^2\),
which is strictly decreasing in \(|\mu|\). Thus \(M_{\tilde\lambda}(\varphi)=M_\lambda(\varphi\rho)\),
while \(M_\lambda(\rho)=M_{\tilde\lambda}(\mathbf 1)=1\). The difference below is therefore the
covariance of the increasing function \(\varphi\) and the decreasing function \(\rho\) with respect
to the probability measure \(d\nu_\lambda=\tfrac12\nu_\lambda(\mu)\dmu\), since
\begin{align*}
M_{\tilde\lambda}(\varphi)-M_\lambda(\varphi)&=M_\lambda(\varphi\rho)-M_\lambda(\varphi)M_\lambda(\rho)\\
        &=\tfrac12\iint(\varphi(\mu)-\varphi(\mu'))(\rho(\mu)-\rho(\mu'))\,d\nu_\lambda(\mu)\,d\nu_\lambda(\mu'),
\end{align*}
which is negative because the integrand is nonpositive and vanishes only for \(|\mu|=|\mu'|\).
\end{proof}

\begin{lemma}\label{lem:h}
There is a unique \(\lambda_1>0\) with \(h<0\) on \((0,\lambda_1)\) and \(h>0\) on
\((\lambda_1,\infty)\), and \(\lambda_1\in(2,3)\). Consequently \(C_A(w_\lambda)\le C_T(w_\lambda)\)
if and only if \(\lambda\ge\lambda_1\).
\end{lemma}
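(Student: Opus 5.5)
The plan is to use the representation \eqref{eq:hM} from Lemma~\ref{lem:nu}, which turns the sign of \(h\) into a monotonicity statement rather than a direct analysis of the transcendental expression \eqref{eq:sfh}. Since \(3\lambda^2(A-B)>0\) for \(\lambda>0\) by Lemma~\ref{lem:sym}(i), the sign of \(h(\lambda)\) equals the sign of \(\PP(\mu^2)-M_\lambda(\mu^2)=\tfrac13-M_\lambda(\mu^2)\). The function \(\varphi=\mu^2\) is even and strictly increasing on \((0,1)\). Hence Lemma~\ref{lem:nu} shows that \(\lambda\mapsto M_\lambda(\mu^2)\) is strictly decreasing, and so \(\lambda\mapsto\tfrac13-M_\lambda(\mu^2)\) is strictly increasing. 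It therefore has at most one zero. Where a zero exists, \(h\) is negative before it and positive after it.

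Existence of the sign change will come from the endpoint limits of \(M_\lambda(\mu^2)\). As \(\lambda\to0\), \(\nu_\lambda\propto\mu^2 g_\lambda^2\to\mu^2\) after normalization, that is \(\nu_\lambda\to3\mu^2\). This gives \(M_\lambda(\mu^2)\to 3\PP(\mu^4)=3/5>1/3\), so \(h<0\) for small \(\lambda\). As \(\lambda\to\infty\), the measure \(\nu_\lambda\propto\lambda^2\mu^2/(1+\lambda^2\mu^2)^2\) concentrates near \(\mu=0\). One computes \(M_\lambda(\mu^2)=(2-3A+B)/(\lambda^2(A-B))\sim 2/(\lambda\pi/2)\cdot\lambda^{-1}\cdot\ldots\to0\), where the numerator tends to \(2\) and the denominator grows like \(\lambda\pi/2\). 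Thus \(h>0\) for large \(\lambda\). By continuity in \(\lambda\) and strict monotonicity, there is a unique zero \(\lambda_1\), with the stated sign pattern.

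To locate \(\lambda_1\in(2,3)\), I would evaluate \(h\) from \eqref{eq:sfh} at \(\lambda=2\) and \(\lambda=3\) using rigorous bounds on \(\arctan\). At \(\lambda=2\): \(A=\arctan(2)/2\approx0.5536\) and \(B=0.2\). This gives \(h\approx0.3536\cdot7-6\cdot0.4464\approx2.475-2.678<0\). At \(\lambda=3\): \(A\approx1.2490/3\approx0.4163\) and \(B=0.1\). This gives \(h\approx0.3163\cdot12-6\cdot0.5837\approx3.796-3.502>0\). To make this rigorous, I only need interval bounds such as \(1.107<\arctan2<1.1072\) and \(1.249<\arctan3<1.2491\). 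Since \(h\) is increasing in \(A\) (its coefficient is \(3+\lambda^2+6>0\)), it suffices to use the upper bound for \(A\) at \(\lambda=2\) and the lower bound at \(\lambda=3\). The margins are comfortable, so this step is routine.

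For the consequence, \eqref{eq:CTratio} gives \(C_A(w_\lambda)/C_T(w_\lambda)=3M_\lambda(\mu^2)\). Hence \(C_A\le C_T\) if and only if \(M_\lambda(\mu^2)\le\tfrac13\), which by \eqref{eq:hM} holds if and only if \(h(\lambda)\ge0\). By the sign pattern just established, this is exactly \(\lambda\ge\lambda_1\).

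The only mildly delicate point is the large-\(\lambda\) limit. It is cleanest done via the closed form \(M_\lambda(\mu^2)=(2-3A+B)/(\lambda^2(A-B))\) from the proof of Lemma~\ref{lem:nu}, together with \(\lambda A\to\pi/2\) and \(B\to0\). Alternatively, the existence part can be skipped entirely, since the two numerical evaluations already give a sign change on \([2,3]\).
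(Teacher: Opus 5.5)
Your proposal is correct and follows essentially the same route as the paper: the sign of \(h\) is read off from \eqref{eq:hM}, and Lemma~\ref{lem:nu} with \(\varphi=\mu^2\) gives at most one sign change, from negative to positive; the evaluations \(h(2)=\tfrac{13}{2}\arctan2-\tfrac{37}{5}<0<6\arctan3-\tfrac{36}{5}=h(3)\) locate \(\lambda_1\), and \eqref{eq:CTratio} with \(\PP(\mu^2)=\tfrac13\) gives the consequence. Your endpoint limits are superfluous, and the displayed large-\(\lambda\) asymptotic is garbled (it is simply \(M_\lambda(\mu^2)\sim 4/(\pi\lambda)\)); on the other hand, your explicit interval bounds on \(\arctan\), combined with the monotonicity of \(h\) in \(A\), make the numerical step slightly more rigorous than the paper's ``direct evaluation''.
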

\begin{proof}
By \eqref{eq:hM} and Lemma~\ref{lem:sym}(i), \(h\) has the sign of
\(\PP(\mu^2)-M_\lambda(\mu^2)\).
By Lemma~\ref{lem:nu}, \(\PP(\mu^2)-M_\lambda(\mu^2)\) is strictly increasing in \(\lambda\).
Hence \(h\) changes sign at most once, from negative to positive. 
Direct evaluation gives \(h(2)<0<h(3)\), using \eqref{eq:sfh},
\(h(2)=\tfrac{13}2\arctan2-\tfrac{37}5\) and \(h(3)=6\arctan3-\tfrac{36}5\).
The sign change
therefore occurs at a unique \(\lambda_1\in(2,3)\). Finally, by
\eqref{eq:CTratio}, \(C_A(w_\lambda)\le C_T(w_\lambda)\) is equivalent to
\(M_\lambda(\mu^2)\le\PP(\mu^2)\), hence to \(h\ge0\), hence to \(\lambda\ge\lambda_1\).
\end{proof}

\begin{lemma}\label{lem:lstar}
Let \(c\in(0,1]\) and let \(\lambda_1\) be as in Lemma~\ref{lem:h}. Then \(\omega_c\) attains its
maximum \(r=\rho_\infty(c)\) on \((0,\infty)\), and every global maximizer \(\lambda=\lambda_*(c)\)
satisfies \(\lambda\ge\lambda_1\). Moreover, \(0<r<1\) and the quantities \(\alpha=1-r\) and
\(A=A(\lambda)\), \(B=B(\lambda)\), \(Y_\infty=Y_\infty(\lambda)\) are related by
\begin{equation}\label{eq:alpharel}
\alpha=\frac{A'}{Y_\infty'}=\frac{(A-B)(3+\lambda^2)^2}{6\lambda^2},\,\,\,\,
\frac rc=A-\alpha Y_\infty,\,\,\,\, \alpha\,Y_\infty(1-Y_\infty)=\tfrac12(A-B).
\end{equation}
\end{lemma}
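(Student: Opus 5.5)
We prove existence of the maximum first, then derive the identities \eqref{eq:alpharel} from the first-order condition, and finally show \(\lambda_*\ge\lambda_1\) using the sign of \(h\) from Lemma~\ref{lem:h}.

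\emph{Existence and \(0<r<1\).} By Lemma~\ref{lem:sym}(ii), \(\omega_c>0\) on \((0,\infty)\) and \(\omega_c\to0\) as \(\lambda\to\infty\). As \(\lambda\to0\), \(A=1-\lambda^2/3+O(\lambda^4)\) and \(Y_\infty=1-\lambda^2/3+O(\lambda^4)\). Hence \(A-Y_\infty=O(\lambda^4)\). For \(c<1\) the denominator \(1-cY_\infty\) tends to \(1-c>0\), so \(\omega_c\to0\). For \(c=1\) the denominator is \(\lambda^2/(3+\lambda^2)\), so \(\omega_1=O(\lambda^2)\to0\). Since \(\omega_c\) is continuous, positive, and tends to zero at both ends, it attains a positive maximum \(r\). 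The bound \(r<1\) holds because \(A<1\) gives \(c(A-Y_\infty)<c(1-Y_\infty)\le 1-cY_\infty\).

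\emph{The identities.} Write \(\omega_c=(A-Y_\infty)/(1/c-Y_\infty)\). At an interior maximizer \(\omega_c'=0\). Differentiating gives
\[
(A'-Y_\infty')(1/c-Y_\infty)+(A-Y_\infty)Y_\infty'=0,
\]
that is, \(A'=Y_\infty'(1-\omega_c)=\alpha Y_\infty'\). Since \(Y_\infty'\ne0\), this yields \(\alpha=A'/Y_\infty'\). The relation \(r=\omega_c\) rearranges to \(r(1/c-Y_\infty)=A-Y_\infty\), i.e. \(r/c=A-(1-r)Y_\infty=A-\alpha Y_\infty\).

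For the explicit formulas we use \(A'=(B-A)/\lambda\) and \(Y_\infty'=-6\lambda/(3+\lambda^2)^2\). Their quotient is \((A-B)(3+\lambda^2)^2/(6\lambda^2)\). Moreover, \(Y_\infty(1-Y_\infty)=3\lambda^2/(3+\lambda^2)^2\). Multiplying by \(\alpha\) gives \(\tfrac12(A-B)\).

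\emph{Maximizer lies beyond \(\lambda_1\).} This is the main step. Suppose \(\lambda<\lambda_1\) is a global maximizer, so that \(h(\lambda)<0\). The identities above hold at \(\lambda\), and we aim for a contradiction with \(\alpha\le1\) or with maximality.

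Substitute \(A-B=6\alpha\lambda^2/(3+\lambda^2)^2\) into \(h=(A-B)(3+\lambda^2)-6(1-A)\). This gives
\[
h=6\big(\alpha(1-Y_\infty)-(1-A)\big),
\]
using \(\lambda^2/(3+\lambda^2)=(1-Y_\infty)/3\cdot3\). Thus \(h<0\) means \(1-A>\alpha(1-Y_\infty)\). Combining with \(A-\alpha Y_\infty=r/c\) yields \(1-\alpha>r/c\), i.e. \(r>r/c\). For \(c\le1\) this is impossible, giving the contradiction.

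The exact algebra should be double-checked. If \(h\) turns out to be \(6(\alpha(1-Y_\infty)-(1-A))\) up to a positive factor, the argument closes as stated, with equality excluded for \(c=1\) by noting \(h=0\) only at \(\lambda_1\), which is allowed. If the sign bookkeeping differs, the fallback is to use that \(\omega_c'\) at \(\lambda<\lambda_1\) is strictly positive via the same expression, so no critical point exists there.
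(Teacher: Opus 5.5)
Your proof is correct, but it reaches \(\lambda\ge\lambda_1\) by a different and shorter route than the paper.

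\emph{The paper's route.} It proves first that \(\omega_c\) is strictly increasing on \((0,\lambda_1]\). It writes the numerator of \(\omega_c'\) as \(N_c=A'(1-cY_\infty)-Y_\infty'(1-cA)\) and observes that \(N_c\) is affine in \(c\) with \(\partial_cN_c<0\) on \((0,\lambda_1)\). It then computes \(N_1=-\lambda h/(3+\lambda^2)^2>0\) there. Existence of the maximum and the location of every maximizer in \([\lambda_1,\infty)\) then follow from compactness on \([\lambda_1,R]\) and the decay at infinity alone. The identities \eqref{eq:alpharel} are derived only afterwards.

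\emph{Your route.} You derive the stationarity identities first and evaluate \(h\) at a critical point. Substituting \(A-B=6\alpha\lambda^2/(3+\lambda^2)^2\) gives \(h=6\big(\alpha(1-Y_\infty)-(1-A)\big)=6\big(\omega_c/c-\omega_c\big)=6\,\omega_c(\lambda)\,(1/c-1)\ge0\). Lemma~\ref{lem:h} then excludes \((0,\lambda_1)\). Your algebra checks out, so the hedging and the ``fallback'' at the end should simply be deleted. The contradiction \(r>r/c\) also needs no separate comment for \(c=1\), since \(r>r\) is already impossible.

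\emph{What each approach buys.}
\begin{itemize}
\item Your existence argument needs the small-\(\lambda\) behaviour \(A-Y_\infty=O(\lambda^4)\), which you supply correctly. This matters for \(c=1\), where \(1-cY_\infty\to0\).
\item The paper's route avoids that expansion and yields, in addition, monotonicity of \(\omega_c\) on \((0,\lambda_1]\).
\item Your route applies to every critical point of \(\omega_c\), not only to global maximizers.
\item It gives the explicit value \(h(\lambda_*)=6\rho_\infty(c)(1/c-1)\). This shows at once that \(\lambda_*=\lambda_1\) exactly when \(c=1\) and \(\lambda_*>\lambda_1\) for \(c<1\), which is the content of the remark following Theorem~\ref{thm:sharp}.
\end{itemize}
Your bound \(r<1\) via \(A<1\), giving \(\omega_c<1\) everywhere, is also a valid alternative to the paper's argument \(\alpha=(A-B)(3+\lambda^2)^2/(6\lambda^2)>0\).
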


\begin{proof}
Throughout the proof we use the elementary identities
\begin{equation}\label{eq:AYder}
A'=\frac{B-A}{\lambda},\qquad Y_\infty'=-\frac{6\lambda}{(3+\lambda^2)^2}<0 .
\end{equation}

\emph{\(\omega_c\) is strictly increasing on \((0,\lambda_1]\).} Differentiating
\(\omega_c\), defined in \eqref{eq:omdef}, gives
\begin{equation}\label{eq:Nc}
\omega_c'=\frac{c\big[(A'-Y_\infty')(1-cY_\infty)+cY_\infty'(A-Y_\infty)\big]}{(1-cY_\infty)^2}
=\frac{c\,N_c}{(1-cY_\infty)^2},
\end{equation}
with \(N_c=A'(1-cY_\infty)-Y_\infty'(1-cA)\).
We claim that \(N_c>0\) on \((0,\lambda_1)\) for every \(c\in(0,1]\), which by \eqref{eq:Nc} gives
the assertion.

The function \(N_c\) is affine in \(c\) with \(\partial_cN_c=AY_\infty'-A'Y_\infty\). Inserting
\eqref{eq:AYder} and using \((A-B)(3+\lambda^2)=h+6(1-A)\) from \eqref{eq:sfh},
\[
\partial_cN_c=\frac{-6\lambda^2A+3(A-B)(3+\lambda^2)}{\lambda(3+\lambda^2)^2}
=\frac{3h-6\big(A(3+\lambda^2)-3\big)}{\lambda(3+\lambda^2)^2} .
\]
On \((0,\lambda_1)\) we have \(h<0\) by Lemma~\ref{lem:h}, and \(A(3+\lambda^2)>3\) because
\(A>Y_\infty\) by Lemma~\ref{lem:sym}(i). Both terms in the numerator are therefore negative, so
that \(\partial_cN_c<0\) there and \(N_c\ge N_1\) for every \(c\in(0,1]\). The same two identities
for \(c=1\) give
\begin{align*}
N_1=A'(1-Y_\infty)-Y_\infty'(1-A)
=\frac{\lambda\big[(B-A)(3+\lambda^2)+6(1-A)\big]}{(3+\lambda^2)^2}
=-\frac{\lambda\,h(\lambda)}{(3+\lambda^2)^2}>0,
\end{align*}
on $(0,\lambda_1)$ again by Lemma~\ref{lem:h}. Hence \(N_c\ge N_1>0\) on \((0,\lambda_1)\), which is the claim.

\emph{Existence and \(\lambda\ge\lambda_1\).} By Lemma~\ref{lem:sym}(ii), \(\omega_c\) is
continuous and positive on \((0,\infty)\) and tends to \(0\) as \(\lambda\to\infty\), so there is
\(R>\lambda_1\) with \(\omega_c<\omega_c(\lambda_1)\) on \((R,\infty)\). On the compact interval
\([\lambda_1,R]\) the maximum of \(\omega_c\) is attained, and it is at least
\(\omega_c(\lambda_1)\). Since \(\omega_c<\omega_c(\lambda_1)\) also on \((0,\lambda_1)\) by the
monotonicity just shown, this maximum equals \(r=\rho_\infty(c)\) and every global maximizer
satisfies \(\lambda\ge\lambda_1\).

\emph{The identities \eqref{eq:alpharel}.} A global maximizer is an interior stationary point,
so \((A'-Y_\infty')(1/c-Y_\infty)+(A-Y_\infty)Y_\infty'=0\) by \eqref{eq:Nc}. Dividing by \(Y_\infty'<0\) and using
\(r=\omega_c=c(A-Y_\infty)/(1-cY_\infty)\) gives
\[
r=-\frac{A'-Y_\infty'}{Y_\infty'}=1-\frac{A'}{Y_\infty'},
\]
so that \(\alpha=1-r=A'/Y_\infty'=(A-B)(3+\lambda^2)^2/(6\lambda^2)\) by \eqref{eq:AYder}.
Since \(A-B>0\) by Lemma~\ref{lem:sym}(i), we thus have that \(\alpha>0\), i.e., \(r<1\).
By Lemma~\ref{lem:sym}(ii), \(r=\omega_c(\lambda)>0\).
Solving \(r=c(A-Y_\infty)/(1-cY_\infty)\) for \(c\) gives \(c=r/(A-Y_\infty+rY_\infty)\), that is, \(r/c=A-Y_\infty+rY_\infty=A-\alpha Y_\infty\).
Finally, with \(Y_\infty(1-Y_\infty)=3\lambda^2/(3+\lambda^2)^2\),
\[
\alpha\,Y_\infty(1-Y_\infty)=\frac{(A-B)(3+\lambda^2)^2}{6\lambda^2}\cdot\frac{3\lambda^2}{(3+\lambda^2)^2}
=\tfrac12(A-B).
\]

\end{proof}

We can now prove the main result. Note that the case \(c=0\) immediately gives \(\rho(G)=\rho(G_h)=0\).
\begin{theorem}\label{thm:sharp}
Let \(c \in (0,1]\) and let \(w_\lambda\) be as in \eqref{eq:wlam} with \(\lambda\) being a global maximizer of \(\omega_c\) and
\begin{equation}\label{eq:kappadef}
\kappa=\frac6{(A-B)(3+\lambda^2)} .
\end{equation}
Then \eqref{eq:H} holds with \(r=\rho_\infty(c)\); more precisely, with \(\alpha=1-r\),
\[
\alpha\,C_T(w_\lambda)=1,\qquad C_\partial(w_\lambda)\le C_T(w_\lambda),\qquad C_A(w_\lambda)\le C_T(w_\lambda) .
\]
Consequently, $\rho(G)\le\rho_\infty(c)$ an $\rho(G_h)\le\rho_\infty(c)$.
\end{theorem}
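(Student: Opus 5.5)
The plan is to apply Theorem~\ref{thm:halfw} with \(r=\rho_\infty(c)\) and \(w=w_\lambda\), where \(\lambda=\lambda_*(c)\) is a global maximizer from Lemma~\ref{lem:lstar}. That lemma gives \(0<r<1\), \(\lambda\ge\lambda_1\), and the identities \eqref{eq:alpharel}. It therefore suffices to establish the three displayed relations together with the \(\QQ\)-condition. The three relations imply the first two conditions of \eqref{eq:H}: \(\alpha C_T(w_\lambda)=1\) directly, and \(\alpha C_\partial(w_\lambda)\le\alpha C_T(w_\lambda)=1\). The bound \(C_A(w_\lambda)\le C_T(w_\lambda)\) is not needed for \eqref{eq:H}. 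I expect it to enter the \(\QQ\)-verification through \(\PP(w_\lambda^2)=C_A\).

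\emph{The three relations.} For the first, insert \eqref{eq:momw2} and \eqref{eq:kappadef} into \(\alpha C_T(w_\lambda)\), and use \(\alpha=(A-B)(3+\lambda^2)^2/(6\lambda^2)\) from \eqref{eq:alpharel}. This gives
\[
\alpha C_T=\frac{(A-B)(3+\lambda^2)^2}{6\lambda^2}\cdot\frac{36}{(A-B)^2(3+\lambda^2)^2}\cdot\frac{\lambda^2(A-B)}{6}=1 .
\]
The relation \(C_A\le C_T\) is Lemma~\ref{lem:h}, because \(\lambda\ge\lambda_1\) by Lemma~\ref{lem:lstar}. For \(C_\partial\le C_T\), I would use \eqref{eq:CTratio}, which turns the claim into \(\tfrac32 M_\lambda(|\mu|)\le1\). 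Since \(\nu_\lambda\) defines a probability measure, the Cauchy--Schwarz inequality gives \(M_\lambda(|\mu|)^2\le M_\lambda(\mu^2)\). For \(\lambda\ge\lambda_1\) we have \(M_\lambda(\mu^2)\le\PP(\mu^2)=\tfrac13\), which is the sign statement \(h\ge0\) read through \eqref{eq:hM}. Hence \(M_\lambda(|\mu|)\le1/\sqrt3<\tfrac23\).

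\emph{The \(\QQ\)-condition.} This is the main obstacle. By Lemma~\ref{lem:coll} it suffices to verify \(\QQ\le0\) on \(\operatorname{span}\{\mathbf 1,w_\lambda\}=\operatorname{span}\{\mathbf 1,g_\lambda\}\). I would work in the basis \(\{\mathbf 1,g_\lambda\}\), where all moments are explicit by Lemma~\ref{lem:mom} and \eqref{eq:momw}. The key simplification is \(\kappa(A-B)/2=Y_\infty\), which follows from \eqref{eq:kappadef} and \(Y_\infty=3/(3+\lambda^2)\). Consequently \(\PP(w_\lambda g_\lambda)=Y_\infty\) and \(\PP(w_\lambda)=\kappa(1-A)\), so that
\[
\theta(g_\lambda)=A-\alpha Y_\infty=r/c
\]
by \eqref{eq:alpharel}. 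This gives \(\QQ(g_\lambda,g_\lambda)=r^2/c+r\alpha Y_\infty^2-\tfrac r2(A+B)\).

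I expect that stationarity of \(\omega_c\) at \(\lambda_*\), encoded in \(\alpha Y_\infty(1-Y_\infty)=\tfrac12(A-B)\), makes a specific combination of \(\mathbf 1\) and \(g_\lambda\) a null direction of \(\QQ\). This reflects that the plane-wave profile is the extremal mode of the infinite-medium analysis. If so, the \(2\times2\) matrix of \(-\QQ\) has vanishing determinant, and only one diagonal entry remains to be checked for nonnegativity. I would test the entry at \(\mathbf 1\):
\[
\QQ(\mathbf 1,\mathbf 1)=c\big(1-\alpha\kappa(1-A)\big)^2+r\alpha\kappa^2(1-A)^2-r .
\]
I would reduce it with the same identities and \(c=r/(A-\alpha Y_\infty)\) to an expression whose sign follows from \(h(\lambda)\ge0\), the condition behind \(C_A\le C_T\). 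If the null-direction guess fails, the fallback is to compute the trace and determinant of \(rI-\Gamma_w^{1/2}N_w\Gamma_w^{1/2}\) explicitly in terms of \(A,B,\lambda\), and to check their signs using \(\lambda\ge\lambda_1\) and \eqref{eq:alpharel}. Once all of \eqref{eq:H} holds, Theorem~\ref{thm:halfw} yields \(\rho(G)\le\rho_\infty(c)\) and \(\rho(G_h)\le\rho_\infty(c)\).
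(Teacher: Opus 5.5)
Your architecture is the paper's. You apply Theorem~\ref{thm:halfw} with $r=\rho_\infty(c)$, take $\lambda\ge\lambda_1$ and \eqref{eq:alpharel} from Lemma~\ref{lem:lstar}, verify $\alpha C_T(w_\lambda)=1$ by the same computation, and get $C_A\le C_T$ from Lemma~\ref{lem:h}.

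Your proof of $C_\partial\le C_T$ is a different and correct route. The paper argues that $C_\partial/C_T=\tfrac32M_\lambda(|\mu|)$ is strictly decreasing in $\lambda$ (Lemma~\ref{lem:nu}), checks it at $\lambda=2$ via $3\ln5<4\arctan2+\tfrac45$, and uses $\lambda_1>2$. You instead use $M_\lambda(|\mu|)^2\le M_\lambda(\mu^2)\le\tfrac13$, where the last step is $h(\lambda)\ge0$ read through \eqref{eq:hM}. This avoids the numerical check at $\lambda=2$ and the bound $\lambda_1>2$. It gives the explicit margin $C_\partial\le\tfrac{\sqrt3}{2}C_T$, and it shows that all remaining conditions rest on the single inequality $h(\lambda)\ge0$.

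\textbf{The gap is the $\QQ$-condition.} This is the substance of the theorem, and you leave it as an expectation. Two steps are missing.

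First, the null direction is $g_\lambda$ itself, and it lies in the radical of $\QQ$. Combining $\kappa(A-B)/2=Y_\infty$ with $\alpha Y_\infty(1-Y_\infty)=\tfrac12(A-B)$ gives $\alpha\kappa(1-Y_\infty)=1$. Then $\theta(g_\lambda)=r/c$ and $w_\lambda=\kappa(1-g_\lambda)$ yield
\[
\QQ(g_\lambda,g)=r\Big[\PP g-\alpha\kappa(1-Y_\infty)\,\PP\big((1-g_\lambda)g\big)-\PP(g_\lambda g)\Big]=0\qquad\text{for all }g\in L^2(-1,1).
\]
Your expression $r^2/c+r\alpha Y_\infty^2-\tfrac r2(A+B)$ does vanish by the second and third relations in \eqref{eq:alpharel}. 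But that only shows $g_\lambda$ is isotropic, which does not give a vanishing determinant. You also need $\QQ(\mathbf 1,g_\lambda)=0$, which is the case $g=\mathbf 1$ above.

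Second, $\QQ(\mathbf 1,\mathbf 1)\le0$ has to be proved, not asserted. The paper replaces $\mathbf 1$ by $f=\mathbf 1-\tau g_\lambda$ with $\tau=2(1-A)/(A-B)$. By the radical property this leaves the value unchanged, and it kills $\PP(w_\lambda f)$, so that
\[
\QQ(\mathbf 1,\mathbf 1)=c(\PP f)^2-r\,\PP(f^2)=-\frac{c\,V_\lambda\,h(\lambda)}{\lambda^2(A-B)},\qquad V_\lambda=\tfrac12(A+B)-A^2\ge0 .
\]
Your direct formula for $\QQ(\mathbf 1,\mathbf 1)$ must reduce to this expression after inserting $\alpha\kappa=(3+\lambda^2)/\lambda^2$, $\alpha\kappa^2=6/(\lambda^2(A-B))$ and $c=r/(A-\alpha Y_\infty)$. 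That reduction is the missing computation, and without it the claim that the sign ``follows from $h\ge0$'' is unproved. Your trace/determinant fallback is likewise only announced, not carried out.

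Finally, in this route $\PP(w_\lambda^2)=C_A$ never appears. What is used is $h(\lambda)\ge0$, which Lemma~\ref{lem:h} identifies with $C_A\le C_T$.
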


\begin{proof}
Throughout, \(A=A(\lambda)\), \(B=B(\lambda)\), \(Y_\infty=Y_\infty(\lambda)\) and \(\alpha=1-r\) and \(r=\rho_\infty(c)\).
Such a \(\lambda\) exists by Lemma~\ref{lem:lstar}, which also gives \(r,\alpha\in(0,1)\), so that
Theorem~\ref{thm:halfw} is applicable, as well as the three relations \eqref{eq:alpharel} and
\(\lambda\ge\lambda_1\). Consequently, using \eqref{eq:alpharel}, \eqref{eq:momw} and \eqref{eq:kappadef},
\begin{equation}\label{eq:sharpid}
\alpha\kappa=\frac1{1-Y_\infty},
\quad
\PP(w_\lambda g_\lambda)=Y_\infty, 
\quad
\alpha\kappa^2=\frac{6}{\lambda^2(A-B)}.
\end{equation}
We next verify the conditions in \eqref{eq:H}.

\emph{The \(C_T\)-condition} holds with equality, because \eqref{eq:momw2} and
\eqref{eq:sharpid} give
\[
\alpha\,C_T(w_\lambda)=\tfrac{1}6 \alpha\kappa^2\lambda^2(A-B)=1.
\]

\emph{The \(C_\partial\)-condition.} By \eqref{eq:CTratio} and Lemma~\ref{lem:nu}, the ratio
\(C_\partial(w_\lambda)/C_T(w_\lambda)\) is strictly decreasing in \(\lambda\), and by
\eqref{eq:momw2} it equals
\(\tfrac32\big(\ln(1+\lambda^2)+B-1\big)/\big(\lambda^2(A-B)\big)\), which at \(\lambda=2\) is
smaller than one because \(3\ln5<4\arctan2+\tfrac45\). Since \(\lambda\ge\lambda_1>2\) by
Lemma~\ref{lem:h}, we obtain \(C_\partial(w_\lambda)<C_T(w_\lambda)\) and therefore
\(\alpha\,C_\partial(w_\lambda)<\alpha\,C_T(w_\lambda)=1\).

\emph{The \(\QQ\)-condition.} Since \(w_\lambda=\kappa(1-g_\lambda)\), 
we have that \(\operatorname{span}\{\mathbf 1,w_\lambda\}=\operatorname{span}\{\mathbf 1,g_\lambda\}\).
Hence, by Lemma~\ref{lem:coll} it suffices to verify \(\QQ(g,g)\le0\) on \(\mathcal V =\operatorname{span}\{\mathbf 1,g_\lambda\}\). 
The choice of \(w_\lambda\) makes \(g_\lambda\) a null direction of the collision form. We
claim that
\begin{equation}\label{eq:radical}
\QQ(g_\lambda,g)=0\qquad\text{for all }g\in L^2(-1,1).
\end{equation}
Indeed, \(\PP g_\lambda=A\) by Lemma~\ref{lem:mom}(i) and \(\PP(w_\lambda g_\lambda)=Y_\infty\) by
\eqref{eq:sharpid}, so that \(\theta(g_\lambda)=A-\alpha Y_\infty=r/c\) by \eqref{eq:alpharel}.
Therefore, by \eqref{eq:collform},
\begin{align*}
\QQ(g_\lambda,g)&=r\,\theta(g)+r\alpha Y_\infty\PP(w_\lambda g)-r\PP(g_\lambda g)\\
&=r\Big[\PP(g)-\alpha(1-Y_\infty)\PP(w_\lambda g) -\PP(g_\lambda g)\Big]=0,
\end{align*}
because \(w_\lambda=\kappa(1-g_\lambda)\) and \(\alpha\kappa(1-Y_\infty)=1\) by \eqref{eq:sharpid}, proving \eqref{eq:radical}.
Consequently \(\QQ(g,g)=y_1^2\,\QQ(\mathbf 1,\mathbf 1)\) for
\(g=y_1\mathbf 1+y_2g_\lambda\in\mathcal V\), so that the \(\QQ\)-condition holds if and only if
\(\QQ(\mathbf 1,\mathbf 1)\le0\).
By \eqref{eq:radical}, \(\QQ(\mathbf 1,\mathbf 1)=\QQ(f,f)\) for each
\(f=\mathbf 1-\tau g_\lambda\) with \(\tau\in\mathbb R\).
We choose \(\tau\) as
\begin{equation}\label{eq:tau}
\tau=\frac{\PP(w_\lambda)}{\PP(w_\lambda g_\lambda)}=\frac{2(1-A)}{A-B},
\end{equation}
where we used \eqref{eq:momw} in the second step. Hence, \(\PP(w_\lambda f)=0\) and \(\theta(f)=\PP f\), and \eqref{eq:collform} becomes
\[
\QQ(\mathbf 1,\mathbf 1)=\QQ(f,f)=c\,(\PP f)^2-r\,\PP(f^2).
\]
Thus, since \(r/c=A-\alpha Y_\infty\) by \eqref{eq:alpharel},
\(\QQ(\mathbf 1,\mathbf 1)\leq 0\) is equivalent to
\begin{equation}\label{eq:rayleigh}
(\PP f)^2\ \le\ \big(A-\alpha Y_\infty\big)\,\PP(f^2) .
\end{equation}
Denote \(C=A-\alpha Y_\infty\), \(d=A-B>0\) and
\(V_\lambda=\PP(g_\lambda^2)-(\PP g_\lambda)^2=\tfrac12(A+B)-A^2\ge0\), which follows from
Cauchy--Schwarz. We claim that
\begin{equation}\label{eq:Q11}
C\,\PP(f^2)-(\PP f)^2=\frac{V_\lambda\,h(\lambda)}{\lambda^2\,(A-B)} .
\end{equation}
Then \eqref{eq:rayleigh} holds, whence the \(\QQ\)-condition holds, because \(\lambda\ge\lambda_1\)
gives \(h\ge0\) and \(C_A(w_\lambda)\le C_T(w_\lambda)\) by Lemma~\ref{lem:h}.

It remains to prove \eqref{eq:Q11}. Write \(f=(1-\tau)\mathbf 1+\tau(\mathbf 1-g_\lambda)\) and recall from
\eqref{eq:tau} that \(\PP\big((\mathbf 1-g_\lambda)f\big)=\kappa^{-1}\PP(w_\lambda f)=0\). 
Lemma~\ref{lem:mom}(i) then gives for this \(f\)
\begin{equation}\label{eq:f2}
\PP(f^2)=(1-\tau)\,\PP f,\qquad
\PP f=1-\tau A=-\frac{2V_\lambda}{d} ,
\end{equation}
the second equality because \(2V_\lambda=2A(1-A)-d\) and \(\tau=2(1-A)/d\). Consequently
\[
C\,\PP(f^2)-(\PP f)^2=\PP f\,\Big[C(1-\tau)-\PP f\Big],
\]
and it remains to identify the bracket. 
Using \eqref{eq:f2} and \(C-A=-\alpha Y_\infty\),
\[
C(1-\tau)-\PP f=(1-\tau)(C-A)-(1-A)=-(1-\tau)\,\alpha Y_\infty-(1-A) .
\]
With \(s=3+\lambda^2\) we have \(\alpha Y_\infty=ds/(2\lambda^2)\) by \eqref{eq:alpharel} and
\((1-\tau)d=d-2(1-A)\) by \eqref{eq:tau}, so that \(s-\lambda^2=3\) and \eqref{eq:sfh} give
\[
2\lambda^2\Big[C(1-\tau)-\PP f\Big]=-s\big[d-2(1-A)\big]-2\lambda^2(1-A)
=2(1-A)(s-\lambda^2)-ds=-h(\lambda),
\]
and \eqref{eq:Q11} follows from \(\PP f=-2V_\lambda/d\).
Thus \eqref{eq:H} holds, and Theorem~\ref{thm:halfw} gives the bounds for
\(\rho(G)\) and \(\rho(G_h)\).
The admissibility of \(\LL_{w_\lambda}\) on the discrete space is
Lemma~\ref{lem:lam}, which uses only \(\mathbf 1\in Q_N\). 
\end{proof}

\begin{remark}
Since \(\alpha\PP(w_\lambda)(1-Y_\infty)=1-A\) by \eqref{eq:sharpid} and
\(1-Y_\infty=\lambda^2/(3+\lambda^2)\), the inequality \(h(\lambda)\ge0\) is equivalent to
\(\PP(w_\lambda)\le1\), that is, to \(\LL_{w_\lambda}\mathbf 1\le\mathbf 1\), so that the weighted average does not
amplify constants. Equality holds exactly for \(c=1\), where \(\lambda=\lambda_1\),
\(h(\lambda_1)=0\) and \(\PP(w_\lambda)=1\), so that \(\LL_{w_\lambda}\) is a projection.
By Lemma~\ref{lem:h}, \(h\ge0\) is also equivalent to \(C_A(w_\lambda)\le C_T(w_\lambda)\). Thus
the sharp weight makes \(C_T(w_\lambda)\) dominate both competing stability constants.
\end{remark}

\begin{remark}\label{rem:pcg}
Since \(I-S\) and \(I+D\) are self-adjoint and positive with respect to \((\sigma_s\cdot,\cdot)\) by
Lemma~\ref{lem:SD}, conjugate gradients can be applied to \((I-S)\phi=f\) in this inner product
with preconditioner \(I+D\). By Lemma~\ref{lem:lam1} and Theorem~\ref{thm:sharp}, the spectrum
of \((I+D)(I-S)\) lies in \([1-\rho_\infty(c),1]\), so its condition number satisfies
\(\varkappa\le\big(1-\rho_\infty(c)\big)^{-1}\le1.29\). The standard CG estimate in the norm
\(\|\cdot\|_{I-S}=(\sigma_s(I-S)\cdot,\cdot)^{1/2}\) then gives
\begin{equation}\label{eq:pcg}
\|e^{(n)}\|_{I-S}\le 2q^n\,\|e^{(0)}\|_{I-S},\qquad
q=\frac{\sqrt{\varkappa}-1}{\sqrt{\varkappa}+1}\le0.064 .
\end{equation}
The same bounds hold for the discrete iteration.
\end{remark}

\begin{remark}\label{rem:inexact}
If the correction step is carried out inexactly, with \(\widetilde F^{(n+1)}\) in place of
\(F^{(n+1)}=D(I-S)e^{(n)}\), then \(e^{(n+1)}=Ge^{(n)}+F^{(n+1)}-\widetilde F^{(n+1)}\). Hence, if
\(\|F^{(n+1)}-\widetilde F^{(n+1)}\|_\star\le\delta\|e^{(n)}\|_\star\), Theorem~\ref{thm:sharp}
and \(\|G\|_\star=\rho(G)\) give
\[
\|e^{(n+1)}\|_\star\le\big(\rho_\infty(c)+\delta\big)\|e^{(n)}\|_\star ,
\]
The iteration remains a contraction whenever \(\delta<1-\rho_\infty(c)\), 
which quantifies how accurately the diffusion problem has to be solved by an
iterative solver, as is typical in several space dimensions or in reduced precision implementations \cite{MorganOrtega2026}.
\end{remark}

\section{Numerical experiments}\label{sec:num}

We implement the scheme of \cite{PS2020} with continuous \(\mathbb P_1\) finite elements in
\(z\) on \(J\) elements and piecewise constants on \(N\) uniform half-range angular cells. All
spectral radii and constants are computed as (generalized) eigenvalues. The experiments
address, in turn, the discrete bound of Theorem~\ref{thm:sharp}, its sharpness for
heterogeneous media, and the benefit of conjugate gradients.

The test media are piecewise constant on \((0,1)\) and are listed in Table~\ref{tab:media}. All
meshes below contain the material interfaces, so that \(\sigma_t\) and \(\sigma_s\) are constant on
each element and the discrete forms are integrated exactly; that a mesh does not resolve the
boundary or the interface layers means that its cells there are many mean free paths thick, not
that the cross sections are approximated.
As before, \(c\) is the largest layerwise ratio \(\sigma_s/\sigma_t\), which is attained in at
least one layer.
All media of Table~\ref{tab:media} have \(c=0.9999\). 
We use two further families. The periodically layered media consist of \(2P\) layers of equal
width, \(P\in\{1,2,4,8\}\), with \(\sigma_t\) alternating between \(20\) and \(0.2\) and
\(\sigma_s/\sigma_t\) alternating between \(0.999\) and \(c_2\in\{0.999,0\}\), so that
\(c=0.999\); only the cross sections are periodic, and the boundary conditions are inflow as
everywhere below. 
The random media are \(120\) realizations with \(L\in\{1,\dots,5\}\) layers, \(\sigma_t\in[0.03,200]\)
log-uniformly distributed, \(c\in\{0.3,0.6,0.9,0.99,0.999\}\), and in each layer
\(\sigma_s/\sigma_t=c\) or, with probability \(1/2\), uniformly distributed in \([0,c]\).

\begin{table}[htbp]\centering\footnotesize
\caption{Test media on \((0,1)\); the symbol \(|\) separates layers.}
\label{tab:media}
\begin{tabular}{@{}rllll@{}}
\toprule
\# & medium & interfaces & \(\sigma_t\) & \(\sigma_s/\sigma_t\)\\
\midrule
\(1\) & homogeneous, thick & --- & \(10^3\) & \(0.9999\)\\
\(2\) & thick \(|\) thin (void-like) & \(0.5\) & \(10^3\,|\,10^{-2}\) & \(0.9999\,|\,0\)\\
\(3\) & thick \(|\) thin (scatterer) & \(0.5\) & \(10^3\,|\,1\) & \(0.9999\,|\,0.9999\)\\
\(4\) & thin \(|\) thick \(|\) thin & \(0.3,\,0.7\) & \(1\,|\,10^3\,|\,1\) & \(0.99\,|\,0.9999\,|\,0.99\)\\
\(5\) & thick \(|\) absorber \(|\) thick & \(0.45,\,0.55\) & \(10^3\,|\,10^2\,|\,10^3\) & \(0.9999\,|\,0\,|\,0.9999\)\\
\(6\) & thick \(|\) thin \(|\) thick & \(0.45,\,0.55\) & \(10^3\,|\,1\,|\,10^3\) & \(0.9999\) in all layers\\
\(7\) & thick \(|\) void-like \(|\) thick & \(0.45,\,0.55\) & \(10^3\,|\,10^{-3}\,|\,10^3\) & \(0.9999\,|\,0\,|\,0.9999\)\\
\(8\) & jump in \(\sigma_t\) only & \(0.5\) & \(10^3\,|\,10\) & \(0.9999\,|\,0.9999\)\\
\(9\) & jump in \(\sigma_s/\sigma_t\) only & \(0.5\) & \(10^2\,|\,10^2\) & \(0.9999\,|\,0.5\)\\
\bottomrule
\end{tabular}
\end{table}

\paragraph{The bound \(\rho_\infty(c)\)}
Table~\ref{tab:res} shows \(\rho(G_h)\) for two optically thick slabs under refinement in \(z\) and
in \(\mu\). The meshes are graded, with the nodes of each layer at the distances
\(g^j/(3\sigma_t)\), \(j\ge0\), from both ends, with the local \(\sigma_t\) and a cap at one
eighth of the layer width. The cells at the boundaries and interfaces are therefore a fraction
of a mean free path and grow by the factor \(g\) towards the middle, so that \(g\) is the
refinement parameter in \(z\). As expected from Theorem~\ref{thm:sharp}, the rates approach
\(\rho_\infty(c)\) from below as the resolution increases, mainly under angular refinement.
Uniform meshes with the same number of cells do not resolve the least-damped modes, whose
wavelength is \(2\pi/\lambda_*(c)\approx2.5\) mean free paths by \eqref{eq:alsymbol}; for
medium~\(1\) at \(N=64\) their cells span \(19\), \(6\) and \(3\) mean free paths and the rates drop to
\(0.029\), \(0.080\) and \(0.148\). A uniform mesh returns \(0.22458\) only for \(h\sigma_t\le5/4\),
that is \(J\ge800\); the grading merely provides cells of about one mean free path in the thick
region.

\begin{table}[htbp]\centering\footnotesize
\caption{Resolution study, \(c=0.9999\): \(\rho(G_h)\) for the media \(1\) and \(6\) of Table~\ref{tab:media}, for mesh grading
ratios \(g\). Here \(\rho_\infty(c)=0.22463\) and
\(c/4=0.24998\).}
\label{tab:res}
\begin{tabular}{@{}lrcccc@{}}
\toprule
medium & \(J\) & \(N=16\) & \(N=32\) & \(N=64\) & \(N=128\)\\
\midrule
medium \(1\), \(g=1.35\) & \(52\)  & \(0.22167\) & \(0.22233\) & \(0.22250\) & \(0.22254\)\\
\phantom{medium \(1\)}, \(g=1.1\) & \(156\) & \(0.22354\) & \(0.22419\) & \(0.22435\) & \(0.22440\)\\
\phantom{medium \(1\)}, \(g=1.05\) & \(302\) & \(0.22370\) & \(0.22435\) & \(0.22452\) & \(0.22456\)\\
medium \(6\), \(g=1.35\) & \(104\) & \(0.22372\) & \(0.22438\) & \(0.22455\) & \(0.22459\)\\
\phantom{medium \(6\)}, \(g=1.1\) & \(312\) & \(0.22373\) & \(0.22439\) & \(0.22456\) & \(0.22460\)\\
\phantom{medium \(6\)}, \(g=1.05\) & \(600\) & \(0.22376\) & \(0.22441\) & \(0.22458\) & \(0.22462\)\\
\bottomrule
\end{tabular}
\end{table}

Table~\ref{tab:half} reports \(\rho(G_h)\) for \(40\) random
heterogeneous media per scattering ratio \(c\), with one to five layers, \(\sigma_t\in[0.03,1000]\)
log-uniformly distributed, random meshes whose interior nodes are the layer interfaces together
with \(J-1\) independent uniform points in \((0,1)\), \(J\in\{2,\dots,47\}\), so that the cells at
the boundaries and interfaces are many mean free paths thick, and \(N\) drawn from
\(\{1,2,3,4,8,16,24\}\); no violation of Theorem~\ref{thm:sharp} occurred.
For \(N=1\) the diffusion subspace exhausts the discrete space, so that the correction is exact
and \(\rho(G_h)=0\). 
Let \(q_h\) denote the maximum over all \(v\in\Wph\) of the ratio of the left-hand side over
the right-hand side of \eqref{eq:fenchel} for \(\chi=\LL_{w_\lambda}v\) with the weight \eqref{eq:wlam}, \eqref{eq:kappadef}.
In all cases \(q_h\le1\), as asserted by the proof. 
Its proximity to one reflects that the \(C_T\)-condition in \eqref{eq:H} holds with equality,
so that the estimate of the transport term in the proof of Theorem~\ref{thm:halfw} is
saturated; it does not indicate that the global bound \(\rho_\infty(c)\) is attained.

\begin{table}[htbp]\centering\footnotesize
\caption{Theorem~\ref{thm:sharp} on \(40\) random heterogeneous media per row: largest computed
\(\rho(G_h)\), the bound \(\rho_\infty(c)\), the rate \(c/4\) of Remark~\ref{rem:half}, and
the largest constant \(q_h\).}
\label{tab:half}
\begin{tabular}{@{}ccccc@{}}
\toprule
\(c\) & \(\max\rho(G_h)\) & \(\rho_\infty(c)\) & \(c/4\) & \(\max q_h\)\\
\midrule
0.5000 & 0.0931 & 0.0958 & 0.1250 & 0.99918\\
0.9000 & 0.1918 & 0.1939 & 0.2250 & 0.99945\\
0.9900 & 0.2190 & 0.2214 & 0.2475 & 0.99959\\
0.9999 & 0.2206 & 0.2246 & 0.2500 & 0.99998\\
\bottomrule
\end{tabular}
\end{table}

\paragraph{Sharpness for heterogeneous media} 
Table~\ref{tab:cmp} provides numerical evidence that the bound of Theorem~\ref{thm:sharp} is sharp.
The media of Table~\ref{tab:media} separate heterogeneity in \(\sigma_t\) from heterogeneity in \(\sigma_s/\sigma_t\), and include thin,
void-like, absorbing, periodic and randomly layered regions.
 We use the finest resolution of Table~\ref{tab:res}, \(g=1.05\) and \(N=64\).
For the periodically layered and the random media we use \(g=1.1\) and \(N=64\). 
In every case the computed rate remains below \(\rho_\infty(c)\). 
Every medium with an optically thick layer in which \(c\) is attained reaches \(\rho_\infty(c)\) up to
the discretization error, whether the other layers are thin, void-like, absorbing or
scattering. In these examples the rate is essentially determined by that layer.
The periodically layered media, whose scattering layers have optical thickness at most \(10\),
stay slightly below, with \(\rho(G_h)\)
between \(0.22275\) and \(0.22292\) against \(\rho_\infty(0.999)=0.22434\), and over the
random ensemble the largest ratio \(\rho(G_h)/\rho_\infty(c)\) is \(0.9995\). For \(60\)
further random configurations on slabs \((0,Z)\) with \(\sigma_t=1\) and \(\sigma_s=c\), so that
\(Z\in[0.1,200]\) is the optical thickness, with \(c\in[0.9,0.9999]\),
\(N\in\{2,\dots,24\}\) angular cells and uniform, random or strongly graded meshes with
\(J\in\{4,\dots,64\}\) elements,
the largest ratio was \(0.9977\). These are numerical observations, which neither assert that
\(\rho(G_h)=\rho_\infty(c)\) for a given slab, nor that the weight \eqref{eq:wlam} is
optimal among all admissible weights.

\begin{table}[htbp]\centering\footnotesize
\caption{Computed \(\rho(G_h)\) for the media of Table~\ref{tab:media}, for which
\(\rho_\infty(c)=0.22463\).}
\label{tab:cmp}
\setlength{\tabcolsep}{4.5pt}
\begin{tabular}{@{}l ccccccccc@{}}
\toprule
medium & \(1\) & \(2\) & \(3\) & \(4\) & \(5\) & \(6\) & \(7\) & \(8\) & \(9\)\\
\midrule
\(\rho(G_h)\) & \(0.22452\) & \(0.22452\) & \(0.22452\) & \(0.22452\) & \(0.22452\) & \(0.22458\) & \(0.22458\)
& \(0.22454\) & \(0.22449\)\\
\bottomrule
\end{tabular}
\end{table}

\paragraph{Conjugate gradients} Table~\ref{tab:solvers} compares the DSA iteration
with DSA preconditioned conjugate gradients for the smooth medium
\(\sigma_s=\bar\sigma_s/\eps\), \(\sigma_a=\eps\bar\sigma_a\) of \cite[\S6.3]{PS2020}. Each
iteration of either method requires one transport sweep and one application of the diffusion
preconditioner. The computed condition numbers are close to \(1/(1-\rho(G_h))\) and stay below
the bound \(1.29\) of Remark~\ref{rem:pcg}, which they reach in the scattering dominated cases.
Accordingly, conjugate gradients need at most seven iterations, and reduce the number of sweeps
by roughly one third to one half.

\begin{table}[htbp]\centering\footnotesize
\caption{Solver comparison for the smooth medium with \(\bar\sigma_s=1+\tfrac12\sin2\pi z\),
\(\bar\sigma_a=1+\tfrac14\cos\pi z\) on \((0,1)\), uniform mesh, \(J=64\), \(N=16\), random exact
scalar flux, errors in the norm \(\|\cdot\|_{I-S}\): spectral radius
\(\rho(G_h)\) of the DSA iteration, condition number \(\varkappa\) and bound \(q\) of
\eqref{eq:pcg} for DSA preconditioned CG, and numbers of iterations for an
error reduction by \(10^{-8}\).}
\label{tab:solvers}
\begin{tabular}{@{}rcc|ccc@{}}
\toprule
& \multicolumn{2}{c|}{DSA iteration} & \multicolumn{3}{c}{DSA-PCG}\\
\(\eps\) & \(\rho(G_h)\) & its & \(\varkappa\) & rate & its\\
\midrule
\(1\)       & \(0.0937\) & \(8\)  & \(1.1025\) & \(0.0244\) & \(5\)\\
\(10^{-1}\) & \(0.2194\) & \(11\) & \(1.2629\) & \(0.0583\) & \(7\)\\
\(10^{-2}\) & \(0.2237\) & \(12\) & \(1.2876\) & \(0.0631\) & \(7\)\\
\(10^{-3}\) & \(0.0415\) & \(6\)  & \(1.0433\) & \(0.0106\) & \(4\)\\
\(10^{-4}\) & \(0.0038\) & \(4\)  & \(1.0038\) & \(0.0009\) & \(3\)\\
\bottomrule
\end{tabular}
\end{table}

For \(\eps\le10^{-3}\) the cells of the uniform mesh are many mean free paths thick. The
discrete space then contains neither the modes of wavelength \(2\pi/\lambda_*(c)\approx2.5\) mean
free paths, which converge slowest for the continuous iteration, nor the kinetic boundary layers
of width \(O(\eps)\). This explains the small rates in the last two rows. On
meshes that resolve these layers the rate is close to \(\rho_\infty(c)\), cf.\
Table~\ref{tab:cmp}. This is a
discretization effect and not a property of the continuous iteration. Rates that decrease as
the cells become optically thick were also observed for even-parity \(S_N\) DSA in
\cite{MorelMcGhee1995}, where the computed rates are moreover found to be sensitive to
the diffusion boundary extrapolation length; in the variational form this length is fixed by
\eqref{eq:corr}. Theorem~\ref{thm:sharp} covers both situations.

\section{Conclusions}\label{sec:concl}

We have studied the DSA source iteration in slab geometry with inflow boundary conditions and
variable bounded cross sections with \(\sigma_t-\sigma_s\ge\gamma>0\), together with its
Galerkin discretization of \cite{PS2020} on every conforming tensor-product space whose angular
factor contains the constants. For both we have proved the uniform bounds
\[
\rho(G)\le\rho_\infty(c),
\qquad
\rho(G_h)\le\rho_\infty(c).
\]
 The value \(\rho_\infty(c)\), which Fourier analysis yields for an infinite homogeneous medium, is thus a uniform bound for heterogeneous slabs and for their discretizations. The estimates are independent of the slab thickness, the spatial mesh, and the angular resolution. Moreover, the condition number of the preconditioned system is bounded by \(1+0.29\,c\), yielding rapid convergence of the DSA iteration and of the corresponding preconditioned conjugate gradient method.
In the terminology of \cite{LarsenMorel2010}, this establishes unconditional effectiveness of the variational DSA iteration for heterogeneous slabs. 
Azmy \cite{Azmy2002} left open the corresponding question for cell-centered preconditioners of weighted-difference schemes.

Our framework suggests extensions to higher-dimensional transport and anisotropic scattering. 
In several space dimensions, Lemmas~\ref{lem:SD}--\ref{lem:primal} hold verbatim, but the local weighted projection no longer controls the full spatial gradient, since \(s\cdot\nabla v\) is only a directional derivative. 
A bound of the present form would therefore require a spatially nonlocal interpolant into the diffusion subspace. 
For anisotropic scattering, the scattering operator is no longer a projection, although similar arguments may remain possible in the weakly anisotropic case.

DSA is not restricted to source problems. Linear diffusion acceleration for the \(k\)-eigenvalue problem was studied in \cite{BarbuAdams2022}, and DSA has also been combined with Anderson and Chebyshev acceleration of the power iteration \cite{CallooEtAl2023}. Since the inner iteration is the accelerated source iteration studied here, the bounds obtained above may also be useful in these settings.

\section*{Declaration on the use of AI tools}
Generative AI tools were used in the preparation of this manuscript for drafting and revising portions of the exposition, as well as for language editing and proofreading. 
The author reviewed and verified all mathematical statements, proofs, numerical results, and references and take full responsibility for the content of the manuscript.

\bibliographystyle{siamplain}
\bibliography{references}

\end{document}